\let\oldmarginpar\marginpar
\renewcommand\marginpar[1]{\-\oldmarginpar[\raggedleft\footnotesize #1]%
	{\raggedright\footnotesize #1}}
\theoremstyle{plain}
\newtheorem{thm}{Theorem}[section]
\newtheorem{theorem}[thm]{Theorem}
\newtheorem{cor}[thm]{Corollary}
\newtheorem{corollary}[thm]{Corollary}
\newtheorem{proposition}[thm]{Proposition}
\newtheorem{lemma}[thm]{Lemma}
\newtheorem{ques}[thm]{Question}
\theoremstyle{definition}
\newtheorem{remark}[thm]{Remark}
\newtheorem{defn}[thm]{Definition}
\newtheorem{definition}[thm]{Definition}
\DeclareMathOperator{\GL}{GL}
\newcommand{\homfill}{\operatorname{Fill}}
\newcommand{\fvol}{\operatorname{FVol}}
\newcommand{\R}{\ensuremath{\mathbb{R}}}
\newcommand{\Z}{\ensuremath{\mathbb{Z}}}
\newenvironment{customthm}[1]
{\innercustomthm}
{\endinnercustomthm}
\title{A coarse embedding theorem for homological filling functions}
\author{Robert Kropholler}
\author{Mark Pengitore}
\keywords{Filling functions, cell complexes, coarse embeddings}
\begin{document}

\begin{abstract}
We demonstrate under appropriate finiteness conditions that a coarse embedding induces an inequality of homological Dehn functions. Applications of the main results include a characterization of what finitely presentable groups may admit a coarse embedding into a hyperbolic group of geometric dimension $2$, characterizations of finitely presentable subgroups of groups with quadratic Dehn function with geometric dimension $2$, and to coarse embeddings of nilpotent groups into other nilpotent groups of the same growth and into hyperbolic groups.
\end{abstract}

\maketitle

\section{Introduction}

A coarse embedding of one metric space into another generalizes the notion of a quasi-isometry. It allows for the metric to be distorted by any functions that tend to infinity, rather than just linear functions allowed in quasi-isometries. 

\begin{definition}
	We say that a map $f\colon X\to Y$ is a {\em coarse embedding} if there are functions $\rho_-, \rho_+\colon \R\to \R$ such that $\lim_{x\to \infty}\rho_-(x) = \lim_{x\to \infty}\rho_+(x) = \infty$ and where the following inequality holds: 
	$$\rho_-(d(x, y)) \leq d(f(x), f(y)) \leq \rho_+(d(x, y)).$$
\end{definition}

In geometric group theory, a coarse embedding is a geometric version of subgroup containment. Indeed, if $H$ is a finitely generated subgroup of $G$, then the inclusion map is a coarse embedding. Thus, there are questions about which results on subgroups remain true when passing to coarse embeddings. 

In general, coarse embedding are wilder than subgroups. For instance, every finitely generated, infinite group admits a coarse embedding of $\Z$, however, there are infinite finitely generated torsion groups. 
Moreover, in the class of hyperbolic groups, there are coarse embeddings of $\Z^2$ into hyperbolic groups while it certainly can't be a subgroup.

For the statement of our theorems, we let $\homfill_{G}^{k}(\ell)$ be the $k$-dimensional homological Dehn function for $G$ i.e. the difficulty of filling $(k-1)$-cycles with $k$-chains. Additionally, if $f, g \colon \mathbb{N} \to \mathbb{N}$ are functions, we say that $f \prec g$ if there is a constant $C>0$ such that for all $n$ we have that $f(n) \leq C g(C n + C) + C n + C$. We say that $f \sim g$ when $f \prec g$ and $g \prec f$. For a group $G$, we denote $K(G,1)$ as a choice of an Eilenberg-Maclane space for $G$. We say a group $G$ has {\em geometric dimension $2$} if the minimal dimension of a $K(G,1)$ is $2$. Finally, we say that a group $G$ is of type $F_n$ if it admits a $K(G,1)$ with a compact $n$-skeleton.

In this paper, we study the preservation of homological filling functions under coarse embeddings. It is known that there are groups $H\subset G$ for which the homological filling function vary greatly. For instance, consider the inclusion $\GL_3(\Z) \hookrightarrow \GL_5(\Z)$, the former has exponential Dehn function \cite[Chapter 11]{epstein} whereas the latter has quadratic Dehn function \cite{younglinear}.

This type of example does not exist if $G$ is assumed to have cohomological dimension 2. See the remark after Theorem 4.6 of \cite{gersten}.
\begin{thm}
	Let $G$ be a group a cohomological dimension 2, and let $H$ be a finitely presented subgroup. 
	Then $\homfill_{H}^2(\ell)\prec\homfill_{G}^2(\ell).$
\end{thm}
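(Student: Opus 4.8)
The plan is to compute both homological Dehn functions from cell complexes built so that the comparison becomes the inclusion of a subcomplex, for which the relevant chain map is an $\ell^1$-isometry onto its image, rather than an arbitrary chain map (which could collapse $\ell^1$-mass).

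First I would fix a two-dimensional model for $G$. Since $G$ is of type $FP_2$ (implicit in $\homfill_G^2$ being defined) and $\mathrm{cd}(G)=2$, it has a length-two resolution $0\to F_2\to F_1\to F_0\to\Z\to 0$ by finitely generated free $\Z G$-modules, and realizing this resolution as a cell complex produces a $G$-cocompact $2$-complex $\widetilde Y$ with $C_*(\widetilde Y)=F_*$, so $\widetilde Y$ has vanishing reduced homology. I deliberately use an \emph{acyclic} complex rather than a $K(G,1)$: this keeps the argument clear of the Eilenberg--Ganea problem, since $\widetilde Y$ need not be simply connected. The single crucial consequence of $\mathrm{cd}(G)=2$ is that $\partial_2\colon C_2(\widetilde Y)\to C_1(\widetilde Y)$ is \emph{injective}, so that every $1$-cycle of $\widetilde Y$ has a \emph{unique} $2$-chain filling; in particular $\homfill_G^2$ is computed from $\widetilde Y$, and for every $1$-cycle $z$ in $\widetilde Y$ the unique $w\in C_2(\widetilde Y)$ with $\partial w=z$ satisfies $\norm{w}_1\le\homfill_G^2(\norm{z}_1)$.

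Next I would realize $H$ inside this model. Since $H$ is finitely presented, hence of type $FP_2$, I would produce an $H$-invariant, $H$-cocompact, connected subcomplex $Z\subseteq\widetilde Y$ with $H_1(Z;\Z)=0$: begin with any connected $H$-cocompact subcomplex, and --- using that $\widetilde Y$ is acyclic, so every $1$-cycle of the subcomplex bounds a $2$-chain in $\widetilde Y$ --- adjoin finitely many $H$-orbits of such fillings to kill $H_1$ (iterating if necessary). Then $C_*(Z)$ is a finitely generated free partial $\Z H$-resolution of $\Z$, so by the standard independence of the two-dimensional homological Dehn function from the chosen $FP_2$-model, $\homfill_H^2(\ell)\sim\sup\{\mathrm{FArea}_Z(z)\colon z\text{ a }1\text{-cycle of }Z,\ \norm{z}_1\le\ell\}$, where $C_*(Z)$ carries the $\ell^1$-norm restricted from $C_*(\widetilde Y)$.

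Finally I would combine the two ingredients. Given a $1$-cycle $z$ in $Z$ with $\norm{z}_1\le\ell$, view it as a $1$-cycle in $\widetilde Y$ of the same norm; it admits a filling $w\in C_2(\widetilde Y)$ with $\norm{w}_1\le\homfill_G^2(\ell)$. On the other hand $H_1(Z)=0$ furnishes a filling $w'\in C_2(Z)$, and injectivity of $\partial_2$ on $\widetilde Y$ forces $w=w'$; hence $w$ is supported on cells of $Z$, so $\mathrm{FArea}_Z(z)\le\norm{w}_1\le\homfill_G^2(\ell)$. Taking the supremum over all such $z$ gives $\homfill_H^2(\ell)\preceq\homfill_G^2(\ell)$. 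The step I expect to be the actual work is the construction of $Z$ with $H_1(Z)=0$ --- essentially the claim that $FP_2$-ness of $H$ lets one complete a cocompact chunk of $\widetilde Y$ to a genuine two-dimensional homological model of $H$ sitting inside $\widetilde Y$, so that the $\Z H$-module of $1$-cycles one must kill is finitely generated and the completion terminates. The two appeals to model-independence of $\homfill^2$ and the $\ell^1$-bookkeeping along $Z\hookrightarrow\widetilde Y$ are routine.
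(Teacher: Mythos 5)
The central gap is at the very first step. From $\mathrm{cd}(G)=2$ together with type $FP_2$ (which is what ``$\homfill_G^2$ is defined'' gives you) one only obtains a resolution $0\to P\to F_1\to F_0\to\Z\to 0$ with $F_1,F_0$ finitely generated free and $P$ finitely generated \emph{projective}: $P$ is the module of $1$-cycles of a Cayley graph, projective by dimension shifting and finitely generated by $FP_2$. Upgrading $P$ to a free module --- which is forced if you want a $G$-cocompact free acyclic $2$-complex $\widetilde Y$, since then $C_2(\widetilde Y)$ is finitely generated free and the cycle module is stably free --- is the Wall-obstruction question of whether type $FP$ implies type $FL$, which is open in general and certainly not a consequence of $\mathrm{cd}(G)=2$. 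Dropping simple connectivity does sidestep Eilenberg--Ganea, but not this. Moreover ``realizing this resolution as a cell complex'' is not automatic even for free modules: $\partial_1$ must send each basis element to a difference of two group-translates of vertices, so one has to build from a Cayley graph and ask that its cycle module become free after adding redundant generators --- the same obstruction again. This is precisely why the paper's geometric statement (\cref{main_thm1}) assumes \emph{geometric} dimension $2$, while the cohomological-dimension statement you are proving is quoted from Gersten, whose argument is algebraic (projectivity of the cycle module in cohomological dimension $2$, plus the fact that $\Z G$-linear maps of finitely generated based modules are $\ell^1$-bounded) and never requires a geometric realization.

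The second gap is the construction of the $H$-cocompact subcomplex $Z\subseteq\widetilde Y$ with $H_1(Z)=0$, which you yourself flag as ``the actual work.'' The proposed iteration only produces a chain $W_0\subset W_1\subset\cdots$ in which each map $H_1(W_i)\to H_1(W_{i+1})$ is zero: the adjoined fillings bring in new $1$-cells, hence new cycles, at every stage, and no termination argument is given; it is not even clear that such a $Z$ exists inside a fixed $2$-complex. The paper's machinery avoids exactly this point: in \cref{lem:injection} one enlarges the ambient complex $G$-equivariantly by adding edges, each capped by a single $2$-cell with a free face, so the result is still $2$-dimensional, acyclic, and deformation-retracts to the original; one then maps the universal cover $X$ of a presentation complex of $H$ in cellularly and compares only the $S$-restricted filling function for cycles coming from $X$ (\cref{lem:equivfill}) with the ambient one via uniqueness of top-dimensional fillings (\cref{lem:fillinginsub}), never needing an $H_1$-trivial $H$-cocompact subcomplex nor model-independence applied to $Z$. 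Your final step --- injectivity of $\partial_2$ forces the ambient filling of a cycle of $Z$ to coincide with its filling in $Z$ --- is correct and is exactly \cref{lem:fillinginsub}, and the $\ell^1$ bookkeeping is fine; but as written the proof rests on the two constructions above, and under the stated hypothesis (cohomological, not geometric, dimension $2$) the first is not available by any known result, so the argument as a whole does not go through.
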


In this paper, we take a more geometric approach and prove a similar theorem for coarse embeddings. 
As we cannot use the algebra of cohomology, we must replace the cohomological dimension assumption with the corresponding geometric dimension assumption. 
We prove the following. 

\begin{customthm}{\cref{main_thm1}}
	Let $G$ be a finitely presented group of geometric dimension $2$, and suppose that $H$ is a finitely presented group which admits a coarse embedding into $G$. Then $\homfill_{H}^2(\ell) \prec \homfill_{G}^2(\ell)$. 
\end{customthm}
Due to \cite[Theorem 5.2]{gersten}, we know that a finitely presented group $G$ is word hyperbolic if and only if $\homfill_{G}^2(\ell) \sim \ell$. Thus, the first consequence of the above theorem is a geometric generalization of Gersten's characterization of finitely presented subgroups of hyperbolic groups of geometric dimension $2$. \cite[Theorem 5.4]{gersten}.
\begin{customthm}{\cref{main_thm2}}
	Let $G$ be a hyperbolic group of geometric dimension $2$, and suppose $H$ is a finitely presented group that admits a coarse embedding into $G$. Then $H$ is a hyperbolic group. 
\end{customthm}

For the next corollary, we collect a few observations. First note that if $\delta_G(\ell)$ is the Dehn function of $G$, then $\homfill^2_G(\ell) \prec \delta_G(\ell)$. Moreover, \cite[Proposition 8]{mineyev} implies that if $\homfill^2_G(\ell)$ grows strictly slower than $\ell^2$, then $G$ is hyperbolic. Thus, we may use Theorem \ref{main_thm2} to obtain a characterization of subgroups of groups with quadratic Dehn function of geometric dimension $2$.
\begin{cor}
	Let $G$ be be a finitely presented group of geometric dimension $2$ with quadratic Dehn function. If $H$ is a finitely presented group that admits a coarse embedding into $G$, then either $H$ is a hyperbolic group or $H$ has quadratic Dehn function. In particular, all finitely presented subgroups of $G$ are either hyperbolic or have quadratic Dehn function.
\end{cor}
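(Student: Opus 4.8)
The plan is to combine Theorem~\ref{main_thm2} with the two standard facts recalled just before the statement. First I would observe that the hypotheses of Theorem~\ref{main_thm2} are nearly in place: $G$ is finitely presented of geometric dimension $2$, and $H$ is finitely presented and admits a coarse embedding into $G$. The only thing missing is the word-hyperbolicity of $G$, which we do not have; instead we only know $\delta_G(\ell) \preceq \ell^2$. So the first step is to apply Theorem~\ref{main_thm1} directly to the pair $(H,G)$, which requires only geometric dimension $2$ of $G$ and finite presentability of both groups, to conclude $\homfill_H^2(\ell) \prec \homfill_G^2(\ell)$.

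Next I would bound the right-hand side. Since $\homfill^2_G(\ell) \prec \delta_G(\ell)$ (the homological filling function is dominated by the Dehn function) and $\delta_G(\ell) \sim \ell^2$ by hypothesis, we get $\homfill_G^2(\ell) \prec \ell^2$, and hence, chaining with the previous step, $\homfill_H^2(\ell) \prec \ell^2$. Now I would split into two cases according to the asymptotic behavior of $\homfill_H^2$. If $\homfill_H^2(\ell)$ grows strictly slower than $\ell^2$, then \cite[Proposition 8]{mineyev} applies and $H$ is hyperbolic. Otherwise $\homfill_H^2(\ell) \sim \ell^2$, and combined with $\homfill_H^2(\ell) \prec \ell^2$ this pins down $\homfill_H^2(\ell) \sim \ell^2$; I would then need to upgrade this to a statement about the genuine Dehn function $\delta_H$. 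Here the key point is that $H$ is finitely presented, so its Dehn function is defined, and since $H$ itself need not have geometric dimension $2$, I cannot invoke the homological-to-homotopical comparison in that form; instead I would argue that once $H$ is \emph{not} hyperbolic, its Dehn function satisfies $\delta_H(\ell) \succeq \ell^2$ by the Gromov gap theorem (no finitely presented group has Dehn function strictly between linear and quadratic), while an upper bound $\delta_H(\ell) \preceq \ell^2$ should follow from the quadratic isoperimetric control inherited through the coarse embedding together with the geometric-dimension-$2$ hypothesis on $G$ — this is essentially the content one extracts by running the filling argument of Theorem~\ref{main_thm1} at the homotopical rather than homological level, or by citing that a finitely presented group admitting a coarse embedding with quadratic homological filling and killing $\pi_1$ appropriately has quadratic Dehn function.

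The main obstacle I anticipate is precisely this last dichotomy step: Theorem~\ref{main_thm1} transfers only the \emph{homological} filling function, and $\homfill_H^2 \sim \ell^2$ does not by itself give $\delta_H \sim \ell^2$ for a general finitely presented $H$ (the gap between $\homfill^2$ and $\delta$ can be genuine when $H$ has higher geometric dimension). So the delicate point is to show that in the present situation the quadratic \emph{homotopical} bound also passes through — either because the target $G$ has quadratic Dehn function (not merely quadratic homological filling) and the coarse embedding can be promoted to control $2$-dimensional discs, or because one invokes a result identifying $\delta_H \sim \ell^2$ from $\homfill_H^2 \sim \ell^2$ plus non-hyperbolicity. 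I would resolve it by appealing to the fact that for $G$ of geometric dimension $2$ the two functions $\homfill_G^2$ and $\delta_G$ are $\sim$-equivalent (so "quadratic Dehn function" and "quadratic homological filling" coincide for $G$), and then re-examining whether the proof of Theorem~\ref{main_thm1} in fact yields the homotopical inequality $\delta_H \prec \delta_G$ under the stated hypotheses; if so, the corollary is immediate, and if not, one falls back on the Gromov gap theorem for the lower bound and a separate van~Kampen-diagram filling argument for the upper bound.

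The final statement for finitely presented subgroups $H \le G$ is then the special case where the coarse embedding is the inclusion, which is a coarse embedding by the remark in the introduction, so no extra work is needed.
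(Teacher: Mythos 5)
Your core chain is exactly the paper's argument: \cref{main_thm1} gives $\homfill_H^2(\ell)\prec\homfill_G^2(\ell)$, the observation $\homfill_G^2(\ell)\prec\delta_G(\ell)\sim\ell^2$ bounds the right-hand side, and \cite[Proposition 8]{mineyev} yields the dichotomy: either $\homfill_H^2$ grows strictly slower than $\ell^2$, in which case $H$ is hyperbolic, or $\homfill_H^2(\ell)\sim\ell^2$. Up to this point your proposal is correct and identical in approach to the paper (which invokes precisely these observations and the main theorem, nothing more).

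Where you go beyond the paper --- trying to convert $\homfill_H^2(\ell)\sim\ell^2$ into $\delta_H(\ell)\sim\ell^2$ --- your treatment is only speculative, and the two mechanisms you float should not be asserted. The proof of \cref{main_thm1} cannot be ``run at the homotopical level'': its key step is \cref{lem:fillinginsub}, which uses that in a $2$-dimensional complex with vanishing second homology one has $\ker(\partial_2)=0$, so top-dimensional homological fillings are unique; there is no analogue of this for van Kampen diagrams, and the argument does not produce $\delta_H\prec\delta_G$. Likewise, the claim that $\homfill_G^2\sim\delta_G$ holds for every group of geometric dimension $2$ is not a result stated or used in the paper; for the particular $G$ at hand both functions are indeed quadratic (by hypothesis together with \cite{mineyev}), but this is a statement about $G$ and transfers nothing to $H$. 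The paper's method delivers, in the non-hyperbolic case, exactly the homological statement $\homfill_H^2(\ell)\sim\ell^2$, to which one may add the lower bound $\ell^2\prec\delta_H(\ell)$ that you correctly extract from the gap theorem; no quadratic upper bound on $\delta_H$ comes out of the coarse-embedding machinery, and your instinct that it does not follow formally from the homological bound alone is sound. So keep your first three steps, phrase the second alternative via the homological filling function (plus the gap-theorem lower bound if desired), and drop the unsupported upgrade claims.
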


The next theorem generalizes \cite[Theorem 1.1]{hanlon_pedroza} to the context of coarse embeddings. It says that under appropriate finiteness conditions the higher dimensional homological Dehn filling functions are non-decreasing under coarse embeddings.
\begin{customthm}{\cref{main_thm3}}
Let $k \geq 1$. Suppose that $G$ admits a finite $(k+1)$-dimensional $K(G,1)$ and that $H$ is a finitely presented group of type $F_{k+1}$. If $H$ admits a coarse embedding into $G$, then $\homfill_H^{k+1}(\ell) \prec \homfill_G^{k+1}(\ell)$. 
\end{customthm}

Since the inclusion of a horosphere in hyperbolic $(k+1)$-space gives a coarse embedding of $\Z^k$ into $\pi_1(M)$ when $M$ is a closed $(k+1)$-dimensional real hyperbolic manifold, the above theorem cannot be improved. In contrast, the following application of Theorem \ref{main_thm3} demonstrates that there exists no coarse embedding of a torsion free, finitely generated nilpotent group whose integral cohomological dimension is $k+1$ into $\pi_1(M)$. Thus, whenever a torsion free, finitely generated nilpotent admits a coarse embedding into $\pi_1(M)$, its integral cohomological dimension must be strictly less than that of the integral cohomological dimension of $M$. This follows from the fact that \cite[Theorem 4]{lang} implies all homological filling functions for hyperbolic groups are linear and that \cite[IV 5.8 Theorem]{varopoulos_saloff-coste_coulhon} and \cite{gruber} together imply the top dimensional homological filling functions for torsion free, finitely generated nilpotent group are superlinear. Similarly, \cite[Theorem 3.3]{kropholler_kielak} implies that torsion free, finitely generated nilpotent groups of Hirsch length $k+1$ cannot coarsely embed into a nonamenable integral Poincar\'{e} duality group of dimension $k+1$.
\begin{cor}
Let $G$ be either a hyperbolic group that admits a finite $(k+1)$-dimensional $K(G,1)$ or a nonamenable integral Poincar\'{e} duality group of dimension $k+1$. Then a torsion-free, finitely generated nilpotent group of Hirsch length $k+1$ cannot admit a coarse embedding into $G$. 
\end{cor}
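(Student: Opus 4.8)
The plan is to argue by contradiction in each of the two cases, reducing both to a clash between the \emph{linear} top-dimensional homological filling of the target and the \emph{superlinear} top-dimensional homological filling of a torsion-free finitely generated nilpotent group. Throughout one should read $k \geq 1$, which is implicit in the statement: for $k = 0$ the nilpotent group in question is $\Z$, and $\Z$ coarsely embeds into every (nontrivial) finitely generated free group, so the hypothesis $k \geq 1$ is genuinely needed.

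So suppose $N$ is a torsion-free, finitely generated nilpotent group of Hirsch length $k+1$ that admits a coarse embedding into $G$. First I would record the standard structural facts about $N$: it is the fundamental group of a closed aspherical $(k+1)$-dimensional nilmanifold, hence it is finitely presented, it admits a compact $K(N,1)$ (so in particular is of type $F_{k+1}$), and it has cohomological dimension exactly $k+1$. Consequently $k+1$ is the largest dimension in which $N$ carries a homological filling function, and by \cite[IV 5.8 Theorem]{varopoulos_saloff-coste_coulhon} together with \cite{gruber} this top-dimensional function $\homfill^{k+1}_N(\ell)$ is superlinear; in particular $\homfill^{k+1}_N(\ell) \not\prec \ell$.

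Now treat the hyperbolic case: suppose $G$ is hyperbolic and admits a finite $(k+1)$-dimensional $K(G,1)$. Since $N$ is finitely presented of type $F_{k+1}$ and coarsely embeds into $G$, \cref{main_thm3} applies and yields $\homfill^{k+1}_N(\ell) \prec \homfill^{k+1}_G(\ell)$. By \cite[Theorem 4]{lang} every homological filling function of a hyperbolic group is linear, so $\homfill^{k+1}_G(\ell) \sim \ell$, whence $\homfill^{k+1}_N(\ell) \prec \ell$ — contradicting the previous paragraph. Hence no coarse embedding $N \hookrightarrow G$ exists. For the Poincar\'{e} duality case I would argue separately, since an integral $\mathrm{PD}_{k+1}$ group need not be finitely presented and \cref{main_thm3} is therefore not directly available: here one invokes \cite[Theorem 3.3]{kropholler_kielak}, which already rules out a coarse embedding of a torsion-free finitely generated nilpotent group of Hirsch length $k+1$ into a nonamenable integral Poincar\'{e} duality group of dimension $k+1$. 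This settles the second case.

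The bookkeeping on $N$ is routine. The step I expect to be the real obstacle — or at least the one demanding the most care — is verifying that the quoted inputs deliver exactly what the argument uses: that \cite[IV 5.8 Theorem]{varopoulos_saloff-coste_coulhon} combined with \cite{gruber} gives strict superlinearity of the \emph{homological} filling function precisely in the top degree $k+1$ (and not merely of the Dehn function, nor only in a lower degree), that \cite[Theorem 4]{lang} indeed covers the homological filling function in degree $k+1$, and that the finiteness hypotheses of \cref{main_thm3} (a \emph{finite} $(k+1)$-dimensional $K(G,1)$ for $G$, and $N$ finitely presented of type $F_{k+1}$) are genuinely satisfied, so that the inequality $\homfill^{k+1}_N \prec \homfill^{k+1}_G$ is legitimate.
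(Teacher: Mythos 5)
Your proposal is correct and follows essentially the same route as the paper: the hyperbolic case via \cref{main_thm3} together with Lang's linearity of homological filling functions for hyperbolic groups \cite{lang} against the superlinear top-dimensional filling of a Hirsch-length-$(k+1)$ nilpotent group from \cite{varopoulos_saloff-coste_coulhon} and \cite{gruber}, and the Poincar\'e duality case by direct appeal to \cite[Theorem 3.3]{kropholler_kielak}. Your remark that $k\geq 1$ is implicitly required is a sensible addition but does not change the argument.
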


More precisely, \cite[IV 5.8 Theorem]{varopoulos_saloff-coste_coulhon} and \cite{gruber} imply that if $G$ is a torsion free, finitely generated nilpotent group of dimension $n$ and has degree of growth $d$, then $\homfill_{G}^{n}(\ell) \sim \ell^{\frac{d}{d-1}}$. Thus, an application of Theorem \ref{main_thm3} implies that $G$ and $H$ are torsion free, finitely generated nilpotent groups of the same integral cohomological dimension where $G$ has a degree of growth strictly less than that of $H$, then $G$ does not admit a coarse embedding into $H$. Since word growth is nondecreasing under coarse embeddings, we have that $H$ does not coarsely embed into $G$. For torsion free, finitely generated nilpotent groups of the same growth and integral cohomological dimension but having distinct asymptotic cones, \cite[Theorem 1.2]{Cohen_Pengitore} implies that there exists no coarse embedding from either $G$ to $H$ or vice versa. Thus, these facts together imply a nearly complete classification of what torsion free, finitely generated nilpotent groups of the same integral cohomological dimension admit coarse embeddings into each other. 
\begin{cor}
Let $G$ and $H$ be torsion free, finitely generated nilpotent groups of the same Hirsch length. If the asymptotic cones of $G$ and $H$ are not isomorphic as Lie groups, then $G$ and $H$ cannot admit coarse embeddings into one another. 
\end{cor}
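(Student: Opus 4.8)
The plan is to argue by cases according to whether $G$ and $H$ have the same degree of polynomial growth. Write $n$ for their common Hirsch length. The first thing I would record is that a torsion-free, finitely generated nilpotent group $\Ga$ of Hirsch length $n$ is a cocompact lattice in its Malcev completion, so the corresponding compact nilmanifold is a finite $n$-dimensional $K(\Ga,1)$; hence $\Ga$ is finitely presented, of type $F_\infty$, and has integral cohomological dimension exactly $n$. In particular the finiteness hypotheses of \cref{main_thm3} (with $k+1=n$) are met by the pair $(G,H)$ in either order. Let $d_G,d_H$ be the degrees of growth (well-defined by Bass--Guivarc'h), and recall from the results cited above that $\homfill_G^n(\ell)\sim \ell^{d_G/(d_G-1)}$ and $\homfill_H^n(\ell)\sim \ell^{d_H/(d_H-1)}$. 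Note also that the statement is vacuous unless $n\geq 3$, since a torsion-free finitely generated nilpotent group of Hirsch length $\leq 2$ is free abelian, with Euclidean asymptotic cone.

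\emph{Case 1: $d_G\neq d_H$.} Say $d_G<d_H$. Since $t\mapsto t/(t-1)$ is strictly decreasing on $(1,\infty)$, we have $\frac{d_G}{d_G-1}>\frac{d_H}{d_H-1}$, so $\homfill_H^n\prec\homfill_G^n$ while $\homfill_G^n\not\prec\homfill_H^n$. If $G$ coarsely embedded into $H$, then \cref{main_thm3} would give $\homfill_G^n(\ell)\prec\homfill_H^n(\ell)$, a contradiction. If $H$ coarsely embedded into $G$, then monotonicity of word growth under coarse embeddings forces $d_H\leq d_G$, again a contradiction. So neither group coarsely embeds into the other.

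\emph{Case 2: $d_G=d_H$.} Then $G$ and $H$ are torsion-free, finitely generated nilpotent groups with the same degree of growth and the same integral cohomological dimension $n$, but with non-isomorphic asymptotic cones; so \cite[Theorem 1.2]{Cohen_Pengitore} directly rules out a coarse embedding in either direction.

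The only real subtlety I anticipate is in Case 1: the filling-function inequality of \cref{main_thm3} only obstructs the embedding that would try to realize a faster-growing homological filling function inside a slower-growing one, so the reverse embedding must be killed separately by the growth argument. Correspondingly, when the growth degrees agree the filling functions are $\sim$-equivalent and give no obstruction at all, which is exactly why the finer invariant of \cite[Theorem 1.2]{Cohen_Pengitore} is needed there. Beyond that, the argument is just assembling the cited results, once one checks that the finiteness hypotheses of \cref{main_thm3} genuinely hold for nilmanifold groups.
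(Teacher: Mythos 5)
Your proposal is correct and follows essentially the same route as the paper: when the growth degrees differ, the computation $\homfill^n(\ell)\sim\ell^{d/(d-1)}$ together with Theorem \ref{main_thm3} blocks the embedding of the slower-growing group into the faster-growing one, monotonicity of word growth blocks the reverse direction, and when the growth degrees agree, \cite[Theorem 1.2]{Cohen_Pengitore} rules out both directions. Your explicit check that the nilmanifold provides the finite $n$-dimensional $K(\Gamma,1)$ needed for Theorem \ref{main_thm3} is a detail the paper leaves implicit, but the argument is the same.
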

The only case that remains is when two torsion free, finitely generated nilpotent groups $G$ and $H$ who have the same asymptotic cone.  These are groups will share many properties such as growth rate and integral cohomological dimension. Moreover, by  \cite[IV 5.8 Theorem]{varopoulos_saloff-coste_coulhon}, we have that $\homfill_G^{n}(\ell) \sim \homfill_H^{n}(\ell)$ where $n$ is the topological dimension of the asymptotic cone. In certain cases, such as when $G$ and $H$ are cocompact lattices in the associated asymptotic cone, we have that $G$ and $H$ are biLipschitz. Hence, they naturally admit coarse embeddings into each other. However, when either $G$ or $H$ is not a lattice in the asymptotic cone, then whether a coarse embedding or even a Lipschitz embedding exists is open.

Finally, we apply the techniques of Theorem \ref{main_thm1}, to obtain characterizations of finitely presented subgroups of infinitely presented small cancellation groups. 
\begin{customthm}{\cref{thm:smallcancellation}}
	Let $G$ be finitely generated group that admits an infinite $C'(1/6)$-small cancellation presentation where no relator is a proper power. 
	If $H$ is finitely presented group that admits a coarse embedding into $G$, then $H$ is a hyperbolic group. 
	In particular, all finitely presented subgroups of $G$ are hyperbolic.
\end{customthm}

Thus, small cancellation groups contain no finitely presented subgroup obstructions to hyperbolicity. However, there are many examples of small cancellation groups that cannot coarsely embed into hyperbolic groups \cite{hume_sisto}.

\subsection{Structure of the article}
In Section \ref{section:embedding}, we introduce basic definitions and embedding results between CW complexes associated to Lipschitz embeddings between finitely generated groups. Using these embedding results, Section \ref{section:filling_functions} introduces homological filling functions and relates homological fillings of codimension $0$ subcomplexes with fillings in the ambient complex. Section \ref{section:main_results} and Section \ref{small_cancellation} give the proofs of the main results.  Section \ref{section:future} finishes with some questions.

\noindent{\bf Acknowledgements:} 
The authors thank Ian Leary for answering questions concerning groups of geometric dimension $2$.
\section{Embedding lemmas}\label{section:embedding}

Throughout this article, let $f\colon H\to G$ be a Lipschitz embedding. 
In this section, we will obtain various embedding results between classifying spaces. 
Later, we will use these to understand filling functions, but first, we introduce some notation. When given a CW complex $X$, we denote its $k$-skeleta as $X^{(k)}$. We denote $S^k$ as the $k$-sphere and $D^k$ as the $k$-disk. 

\begin{definition}
	A {\em Cayley $n$-complex} for a group $G$ is an $n$-dimensional cell complex $X$ equipped with a free, cellular $G$-action with one orbit of vertices such that $\pi_i(X) = 1$ for all $i<n$. We say that $X$ is \emph{cocompact} if there are finitely many orbits of $k$-cells for each $k$.
\end{definition}

The 1-skeleton of a Cayley $n$-complex is a Cayley graph for $G$. 
Throughout we will assume that the $1$-skeleton is a simplicial graph. 
Thus, we may consider the map $f$ as a map of vertices from any Cayley complex for $G$ to any Cayley complex for $H$. 

The following lemma follows immediately from the definition of type $F_n$. 
\begin{lemma}
	A group $G$ is of type $F_n$ if and only if it admits a cocompact Cayley $n$-complex. 
\end{lemma}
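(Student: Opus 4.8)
The plan is to prove the two implications separately, in each case passing between a $K(G,1)$ and its universal cover (truncated, resp.\ completed, in dimension $n$); I will assume $n\geq 1$, since for $n=0$ both conditions hold for every group. Suppose first that $G$ is of type $F_n$ and fix a $K(G,1)$, call it $Y$, with compact $n$-skeleton. Because $n\geq 1$, the graph $Y^{(1)}$ is finite and connected, so collapsing a spanning tree $T\subseteq Y^{(1)}$ is a homotopy equivalence (the pair $(Y,T)$ has the homotopy extension property and $T$ is contractible); hence $Y/T$ is again a $K(G,1)$, still has compact $n$-skeleton, and now has a single vertex. Replacing $Y$ by $Y/T$, let $\widehat Y\to Y$ be the universal cover. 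Then $\widehat Y$ is contractible, the deck group $G$ acts freely and cellularly, and $X\bdef\widehat Y^{(n)}$ inherits a free cellular $G$-action with a single orbit of vertices, namely the fibre over the unique vertex of $Y$. Since $\widehat Y$ is obtained from $X$ by attaching cells of dimension $\geq n+1$, the inclusion induces isomorphisms $\pi_i(X)\cong\pi_i(\widehat Y)=1$ for $i<n$, and $X/G=Y^{(n)}$ is compact; thus $X$ is a cocompact Cayley $n$-complex.

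Conversely, let $X$ be a cocompact Cayley $n$-complex. The idea is to equivariantly complete $X$ to a contractible complex without altering its $n$-skeleton. Since $\pi_i(X)=1$ for $i<n$, the first potentially nontrivial homotopy group is $\pi_n(X)$; choosing elements whose $G$-translates (normally) generate it, represent each by a map $S^n\to X$ and attach an $(n+1)$-cell along it and along all of its $G$-translates. The $G$-action extends freely and cellularly, the $n$-skeleton is unchanged, and the new complex has trivial $\pi_n$. Iterating one dimension at a time produces a nested sequence $X=X_n\subseteq X_{n+1}\subseteq\cdots$ in which $X_{k+1}$ is obtained from $X_k$ by equivariantly attaching $(k+1)$-cells and $\pi_i(X_k)=1$ for $i<k$. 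The union $\widetilde X=\bigcup_k X_k$ then has $\pi_i(\widetilde X)=1$ for all $i$, hence is contractible, carries a free cellular $G$-action, and satisfies $\widetilde X^{(n)}=X$. Therefore $\widetilde X\to\widetilde X/G$ is a universal covering and $\widetilde X/G$ is a $K(G,1)$ whose $n$-skeleton is $X/G$, which is compact because $X$ is cocompact. Hence $G$ is of type $F_n$.

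The content is entirely standard, so the ``main obstacle'' is really just bookkeeping: the equivariant attachment of cells to kill successive homotopy groups (one selects $\Z G$-module generators, which need not be finite, since we only want $F_n$ and not $F_{n+1}$, and attaches full $G$-orbits of cells), the verification that the free cellular $G$-action on $\widetilde X$ descends to a genuine CW structure on the quotient with $\widetilde X\to\widetilde X/G$ a covering map (which is fine here because the $1$-skeleton is a simplicial graph on which $G$ acts freely, after subdividing if necessary), and the repeated use of the elementary fact that attaching cells of dimension $\geq n+1$ changes neither the $n$-skeleton nor the homotopy groups in degrees below $n$.
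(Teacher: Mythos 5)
Your argument is correct and is exactly the standard construction the paper has in mind: the paper states this lemma without proof (``follows immediately from the definition''), and your two directions --- truncating the universal cover of a one-vertex $K(G,1)$ with compact $n$-skeleton, and conversely equivariantly attaching cells of dimension $\geq n+1$ to a cocompact Cayley $n$-complex to build a contractible complex whose quotient is a $K(G,1)$ --- are the intended justification. The points you flag as bookkeeping (possibly infinitely many orbits of higher cells, and rigidity/subdivision so that the quotient is a CW complex and the projection a covering) are handled correctly.
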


This next proposition demonstrates that under appropriate finiteness conditions we may extend the Lipschitz map $f \colon H \to G$ to be an inclusion of cocompact Cayley $n$-complexes.

\begin{proposition}\label{lem:injection}
	Let $H$ be a group of type $F_n$ for $n>1$ and let $X$ be a cocompact Cayley $n$-complex for $H$. 
	
	Let $G$ be a group of type $F_{n-1}$, and let $Z$ be Cayley $n$-complex for $G$ with a cocompact $(n-1)$-skeleton
	
	Let $f\colon H\to G$ be a $C$-Lipschitz injection. 
	Then there exists a Cayley $n$-complex $Y$ for $H$ such that the following hold:
	\begin{itemize}
		\item $Y$ has cocompact $(n-1)$-skeleton, 
		\item $Z$ is a deformation retract of $Y$, 
		\item there is a map $\phi\colon X\to Y$ extending $f$, which is injective on $n-1$-skeleta, 
		\item there exists $N\geq 0$ such that each $n$-cell of $X$ is mapped to a collection of $\leq N$ $n$-cells in $Y$.
	\end{itemize}
\end{proposition}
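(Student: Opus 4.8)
The plan is to build the Cayley $n$-complex $Y$ for $H$ in two stages. First, I would construct the $(n-1)$-skeleton of $Y$ by combining the $(n-1)$-skeleton of $Z$ with the image of the $(n-1)$-skeleton of $X$, together with enough extra cells to kill homotopy in degrees $< n$ while keeping things cocompact and keeping $Z$ as a retract. Second, I would attach $n$-cells to $Y$ so that (a) the original $n$-cells of $Z$ are still present (so the deformation retraction onto $Z$ extends over dimension $n$), and (b) the image of each $n$-cell of $X$ under the extension of $f$ on $(n-1)$-skeleta can be filled — i.e. its boundary, which is now an $(n-1)$-cycle in $Y^{(n-1)}$, bounds — and to do this in a uniformly bounded way so the last bullet holds.

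More concretely: the map $f\colon H \to G$ is $C$-Lipschitz, and since the $1$-skeleton of $X$ is the Cayley graph of $H$, $f$ sends an edge of $X$ to a path of length $\le C$ in the Cayley graph of $G$, hence (choosing such paths $H$-equivariantly, using that $f$ may be taken $H$-equivariant after adjusting by a bounded amount, or rather choosing representatives on finitely many $H$-orbits of edges and translating) a cellular map $X^{(1)} \to Z^{(1)}$. To extend over $2$-cells of $X$: the boundary of a $2$-cell of $X$ maps to a loop of bounded length in $Z^{(1)}$, which is nullhomotopic since $Z$ is simply connected (as $n \ge 2$); pick a filling disk, and again do this equivariantly over the finitely many $H$-orbits of $2$-cells. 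Iterating, over the finitely many $H$-orbits of $k$-cells for each $k \le n-1$ we get bounded-size fillings of the images of their boundaries, using $\pi_{k-1}(Z) = 1$; this defines $\phi$ on $X^{(n-1)}$ mapping into $Z^{(n-1)}$, which we then arrange to be \emph{injective} on $(n-1)$-skeleta by a general-position/mapping-cylinder trick — replace $Y^{(n-1)}$ by $Z^{(n-1)}$ with extra cells glued along the non-injective locus, or more cleanly take $Y^{(n-1)}$ to be the mapping cylinder of $\phi|_{X^{(n-1)}} \colon X^{(n-1)} \to Z^{(n-1)}$, suitably cellulated, which is cocompact (finitely many $H$-orbits of cells in $X^{(n-1)}$ and $Z^{(n-1)}$), deformation retracts onto $Z^{(n-1)}$, and contains $X^{(n-1)}$ as an embedded subcomplex.

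Now $Y^{(n-1)}$ may fail to be $(n-1)$-connected (the mapping cylinder is homotopy equivalent to $Z^{(n-1)}$, which has the homotopy type of a wedge of $(n-1)$-spheres plus lower stuff), so I attach $n$-cells to $Y^{(n-1)}$ to kill $\pi_{n-1}$: first attach the $n$-cells of $Z$ (via the inclusion $Z^{(n-1)} \hookrightarrow Y^{(n-1)}$) — there are finitely many $H$-orbits... wait, finitely many $G$-orbits; but $Z$ has cocompact $(n-1)$-skeleton only, so there may be infinitely many orbits of $n$-cells, which is fine since we only need $Y$'s $(n-1)$-skeleton cocompact. Then, for each $n$-cell $\sigma$ of $X$, its boundary $\partial\sigma$ maps under $\phi$ to an $(n-1)$-sphere in $Y^{(n-1)}$; since after attaching $Z$'s $n$-cells the space $Z$ (sitting inside $Y$) is $(n-1)$-connected and $Y^{(n-1)}$ deformation retracts relevant classes into $Z$, this sphere is nullhomotopic, so I attach an $n$-cell filling it — doing this over the finitely many $H$-orbits of $n$-cells of $X$ and translating gives boundedly many new $n$-cells per $n$-cell of $X$, establishing the last bullet with $N$ the max over orbit representatives. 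Finally $Y$, having all cells in dimensions $\le n$, with $\pi_i(Y^{(n-1)}\cup\{n\text{-cells}\}) = \pi_i(Y)$ trivial for $i < n$ by construction, is a Cayley $n$-complex for $H$; $Z \subseteq Y$ is a deformation retract because $Y$ is built from $Z$ by adjoining a mapping cylinder (which retracts) and then $n$-cells along nullhomotopic spheres (each such adjunction is a homotopy equivalence rel $Z$, or at least the retraction extends). \textbf{The main obstacle} I anticipate is making the construction simultaneously $H$-equivariant, cocompact in the $(n-1)$-skeleton, injective on $(n-1)$-skeleta, \emph{and} compatible with the deformation retraction onto $Z$ — in particular, verifying that attaching the $n$-cells of $X$'s images does not destroy the retraction requires care, and one must be careful that the fillings chosen over orbit representatives of $(n-1)$- and $n$-cells in $X$ are genuinely consistent with the $H$-action on $X$ (which may permute cells within an orbit with nontrivial stabilizer behavior, though for a free action this is not an issue). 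The bounded-size requirement in the last bullet then follows automatically from cocompactness of $X$, but pinning down "bounded" cleanly — that a single $n$-cell of $X$ meets only boundedly many $n$-cells of $Y$ in its image — is where the quantitative content lives.
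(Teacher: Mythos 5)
Your construction diverges from the paper's in a way that breaks the statement itself, not just the exposition. The proposition needs $Y$ to be a Cayley $n$-complex (carrying a free cellular action of the ambient group $G$ with one orbit of vertices — the ``for $H$'' in the statement is a typo, as the use of the proposition in \cref{main_thm1} shows), with cocompact $(n-1)$-skeleton and with $Z$ a deformation retract, and these group-theoretic features are exactly what your mapping-cylinder device destroys. The mapping cylinder of $\phi|_{X^{(n-1)}}\colon X^{(n-1)}\to Z^{(n-1)}$ carries no $G$-action (and no $H$-action either, since $H$ does not act on $Z$), so ``cocompact $(n-1)$-skeleton'', which means finitely many orbits of cells under the group action, is not even meaningful for it; it has vertices of two kinds rather than one orbit; and the copy of $X^{(n-1)}$ at the top of the cylinder does not literally extend $f$ on vertices, which is what the later distance estimates (\cref{lem:coarse}, \cref{thm:qi}) use. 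Relatedly, your appeal to choosing the edge-paths and fillings ``$H$-equivariantly'', and the suggestion that $f$ ``may be taken $H$-equivariant after adjusting by a bounded amount'', has no content: $f$ is only a Lipschitz injection, not a homomorphism, and $H$ acts on neither $G$ nor $Z$. The paper avoids all of this by never mapping $X$ into $Z$: it enlarges $Z$ \emph{$G$-equivariantly} — an edge for every pair of vertices at distance $\le C$, then a $(k+1)$-cell for every cellular map $S^k\to Z_k$ with at most $L$ $k$-cells — so that every cell of $X^{(n-1)}$ has its own new cell, injectivity holds by construction, $G$ still acts freely with cocompact $(n-1)$-skeleton, and all finiteness comes from $G$-cocompactness of $Z$'s skeleta rather than from any equivariance of $f$.

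The second genuine error is in the top dimension and in how you keep $Z$ a deformation retract. Attaching an $n$-cell along a null-homotopic $(n-1)$-sphere is \emph{not} a homotopy equivalence: it wedges on an $n$-sphere. So after attaching one new $n$-cell for each $n$-cell of $X$, the space $Y$ has $H_n(Y,\Z)\neq H_n(Z,\Z)$ in general, $Z$ cannot be a deformation retract (at best a retract), and the downstream use of the proposition collapses, since \cref{lem:fillinginsub} needs $H_n(Y,\Z)=0$ so that top-dimensional fillings are unique. The paper treats the two regimes differently: for $k\le n-2$, each newly attached $(k+1)$-cell $c$ is immediately paired with a $(k+2)$-cell whose boundary is $c$ together with a filling $d$ of the same sphere inside $Z_k$, and this pairing is what preserves the deformation retraction — a move unavailable in dimension $n$, where there is no room for $(n+1)$-cells. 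Precisely for that reason the $n$-cells of $X$ receive no new cells at all: each is mapped onto an \emph{existing} filling of its boundary sphere, of size bounded by the $(n-1)$-st filling function of $Z$ evaluated at $L$, which is why the fourth bullet asserts only ``$\le N$ $n$-cells'' rather than injectivity in dimension $n$. Your first stages (extend cell by cell using $(n-1)$-connectedness of $Z$, bound fillings via cocompactness) do capture part of the argument, but the injectivity fix and the top-dimensional step must be replaced by the equivariant enlargement of $Z$ and the ``map onto existing fillings'' device for the proposition as stated, and as later applied, to be true.
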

\begin{proof}
	As mentioned above, we can consider $f$ as an injection at the level of $0$-skeleta. We now show how to extend this map across higher dimensional cells. 
	Since $f$ is $C$-Lipschitz, the end points of each edge of $X$ are mapped to points at distance at most $C$ apart. 
	Let $Z_1'$ be the space obtained from $Z$ by adding an edge between every pair of points $z_1, z_2$ such that $d(z_1, z_2)\leq C$. 
	Since $Z$ has a cocompact $1$-skeleton, we only add finitely many orbits of edges in this process. 	
	
	For each orbit of edges $[e]$ added to $Z$, let $\gamma_e$ be a minimal length path in $Z$ between the endpoints of $e$.  
	Now attach a 2-cell to $Z_1'$ with boundary $\gamma_e \bar{e}$. 
	Let $Z_1$ be the complex obtained by adding these disks equivariantly. 
	Since we only added finitely many orbits of edges to $Z$ to obtain $Z_1'$, we only add finitely many orbits of 2-cells to $Z_1'$ to obtain $Z_1$.
	We can now extend $f$ to an injective cellular map $X^{(1)}\to Z_1^{(1)}$. 
	See \cref{fig:extraedges} for a depiction of this procedure.
	
	\begin{figure}[H]
		\centering
		\begin{tikzpicture}[scale=0.5]
		\draw (0, 0) -- (1, -2) -- (3, -1) -- (4, -3) -- (5, -2) -- (6, -1) -- (7, -2) -- (8, 0);
		\fill (0,0) circle [radius=0.05] node [left] {$z_1$}; 
		\fill (8,0) circle [radius=0.05] node [right] {$z_2$}; 
		
		\filldraw [lightgray] (12, 0) -- (13, -2) -- (15, -1) -- (16, -3) -- (17, -2) -- (18, -1) -- (19, -2) -- (20, 0) -- (12, 0) ;
		\draw (12, 0) -- (13, -2) -- (15, -1) -- (16, -3) -- (17, -2) -- (18, -1) -- (19, -2) -- (20, 0);
		\draw [very thick] (20, 0) -- (12, 0);
		\fill (12,0) circle [radius=0.05] node [left] {$z_1$}; 
		\fill (20,0) circle [radius=0.05] node [right] {$z_2$}; 
		\draw (10, -1) node {$\leadsto$};
 		\end{tikzpicture}
		\caption{An extra edge is added between every pair of points $z_1, z_2$ at distance $\leq C$. The resulting copy of $S^1$ is then filled with a disk. }
		\label{fig:extraedges}
	\end{figure}
	
	We now proceed by induction.  
	Suppose that we have built a complex $Z_k$ which has a cellular embedding that extends $f$ to the $k$-skeleton of $X$ for some $k<n-1$ and where $Z$ is a deformation retract of $Z_k$.
	Moreover, assume $G$ acts cocompactly on the $(n-1)$-skeleton of $Z_k$. 
	Since $X$ is a cocompact Cayley $n$-complex, there are finitely many orbits of $(k+1)$-cells. Let $D_1, \dots, D_l$ be representatives of these orbits. Let $L$ be the maximum number of cells in the boundary of $D_i$. 
	
	Let $\mathcal{S}$ be the collection of cellular maps $S^k\to Z_k$ such that the image contains $\leq L$ $k$-cells. Let $Z_{k+1}'$ be the complex obtained by attaching a $k+1$-cell to each such map. Since $H$ acts cocompactly on the $(k+1) $-skeleton of $Z_k$, we only attach finitely many orbits of cells to obtain $Z_{k+1}'$. Thus, $H$ still acts cocompactly on the $(k+1)$-skeleton of $Z_{k+1}'$, and moreover, we have a cellular embedding $X^{(k+1)}\to Z_{k+1}'. $
	
	Now for each $(k+1)$-cell $c$ added to $Z_{k}$, its boundary is null-homotopic in $Z_k$. Therefore, it bounds a $(k+1)$-cell $d$ in $Z_k$. We can now attach a $(k+2)$-cell to $Z_{k+1}'$ with boundary which consists of $c$ on the upper hemisphere and $d$ on the lower hemisphere. Let the complex obtained this way be $Z_{k+1}$. 
	
	See \cref{fig:extradisks}, for a depiction of the 2-dimensional case.
	\begin{figure}[H]
		\centering
		\def\svgwidth{120mm}
\begingroup%
  \makeatletter%
  \providecommand\color[2][]{%
    \errmessage{(Inkscape) Color is used for the text in Inkscape, but the package 'color.sty' is not loaded}%
    \renewcommand\color[2][]{}%
  }%
  \providecommand\transparent[1]{%
    \errmessage{(Inkscape) Transparency is used (non-zero) for the text in Inkscape, but the package 'transparent.sty' is not loaded}%
    \renewcommand\transparent[1]{}%
  }%
  \providecommand\rotatebox[2]{#2}%
  \newcommand*\fsize{\dimexpr\f@size pt\relax}%
  \newcommand*\lineheight[1]{\fontsize{\fsize}{#1\fsize}\selectfont}%
  \ifx\svgwidth\undefined%
    \setlength{\unitlength}{447.63742331bp}%
    \ifx\svgscale\undefined%
      \relax%
    \else%
      \setlength{\unitlength}{\unitlength * \real{\svgscale}}%
    \fi%
  \else%
    \setlength{\unitlength}{\svgwidth}%
  \fi%
  \global\let\svgwidth\undefined%
  \global\let\svgscale\undefined%
  \makeatother%
  \begin{picture}(1,0.29788627)%
    \lineheight{1}%
    \setlength\tabcolsep{0pt}%
    \put(0,0){\includegraphics[width=\unitlength,page=1]{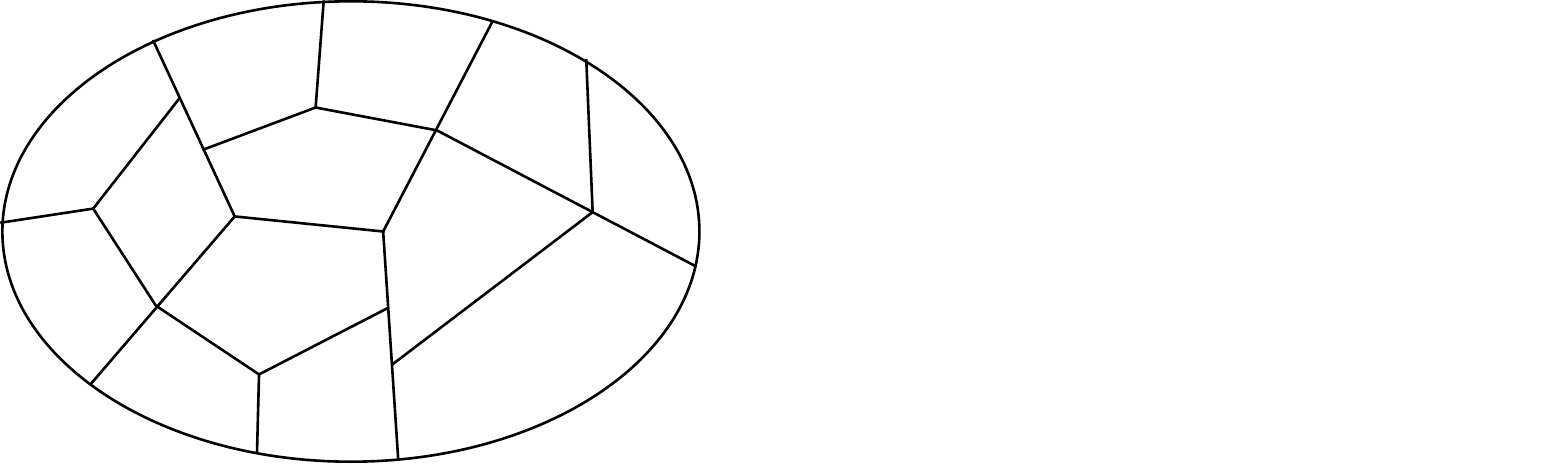}}%
    \put(0.40762267,0.03580227){\color[rgb]{0,0,0}\makebox(0,0)[lt]{\lineheight{1.25}\smash{\begin{tabular}[t]{l}$w$\end{tabular}}}}%
    \put(0,0){\includegraphics[width=\unitlength,page=2]{DD1.pdf}}%
    \put(0.48536588,0.14371134){\color[rgb]{0,0,0}\makebox(0,0)[lt]{\lineheight{1.25}\smash{\begin{tabular}[t]{l}$\leadsto$\end{tabular}}}}%
    \put(0,0){\includegraphics[width=\unitlength,page=3]{DD1.pdf}}%
  \end{picture}%
\endgroup%

		\caption{The 2-dimensional case from the proof of \cref{lem:injection}. We add an extra 2-cell for each disk diagram with boundary $w$, where $|w| \leq L$. We then fill the resulting sphere with a disk.}
		\label{fig:extradisks}
	\end{figure} 
	
	We will now show that $Z_{k}$ is a deformation retract of $Z_{k+1}$.
	Let $T = Z_{k+1}/Z_k$ be the space obtained by collapsing $Z_k$ to a point. 
	We can understand the space $T$ by building it as follows. 
	Let $S = Z_{k+1}'/Z_k$. 
	Then $S$ has the homotopy type of a wedge of $k$-spheres as we obtained $Z_{k+1}$ from $Z$ by attaching $k$-cells. 
	For each sphere in $S$ we attached a $(k+1)$-cell to $Z_{k+1}'$.
	After collapsing $Z_k$, the attaching map is homotopic to the identity $S^k\to S^k$.  
	Thus, we obtain $T$ by attaching $(k+1)$-cells to $S$ one for each sphere in $S$. 
	Hence, $T$ has the homotopy type of a wedge of copies of $D^{k+1}$ which is contractible. 
	Since $Z_{k}$ is a cellular subcomplex of $Z_{k+1}$ with contractible quotient, we see that $Z_k$ is a deformation retract of $Z_{k+1}$.
	Thus, by induction $Z$ is a deformation retract of $Z_{k+1}$. 
	
	Now assume that we have a cellular embedding at the level of $(n-1)$-skeleta. 
	By assumption, there are finitely many orbits of $n$-cells in $X$. 
	Let $\mu_{i, h}\colon S^{n-1}\to X$ be the attaching map of the $i$-th $n$-cell based at $h$. 
	The number of cells in the image of $\mu_{i, h}$ is uniformly bounded as $S^{n-1}$ is compact and there are only finitely many orbits of $n$-cells. 
	Let $L$ be the bound on the number of $(n-1)$-cells in $\mu_{i, h}(S^{n-1})$. 
	We have a cellular embedding at the level of $(n-1)$-skeleta; thus, $\phi\circ\mu_{i, h}(S^{n-1})$ contains at most $L$ cells. 
	Since $Z$ is a Cayley $n$-complex, we have that $\phi\circ\mu_{i, h}$ bounds a copy of $D^n$. 
	Thus, we can extend the map $\phi$ to the cell $D_{i, h}$ attached via $\mu_{i, h}$ by sending the attached cell to the copy of $D^n$ bounded by $\phi\circ\mu_{i, h}$. 
	The number of $n$-cells in the image of $D_{i, h}$ is bounded by $\delta(L)$ where $\delta$ is the $(n-1)$-st filling function of $Z$. 
\end{proof}

\begin{remark}\label{rem:cellular}
	By subdividing the $n$-cells of $X$, we may obtain a cellular map $X'\to Y$. 
\end{remark}

\begin{definition}
	We say that a map $f\colon H\to G$ is a {\em coarse embedding} if there are functions $\rho_-, \rho_+\colon \R\to \R$ such that $\lim_{x\to \infty}\rho_-(x) = \lim_{x\to \infty}\rho_+(x) = \infty$ and where the following inequality holds: 
	$$\rho_-(d(x, y)) \leq d(f(x), f(y)) \leq \rho_+(d(x, y)).$$
\end{definition}

\begin{proposition}\label{prop:coarseimplieslip}
	Let $f\colon H\to G$ be a coarse embedding. 
	Then $f$ is $\rho_+(1)$-Lipschitz. 
\end{proposition}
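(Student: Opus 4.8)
The plan is to exploit the fact that $H$, being a finitely generated group equipped with a word metric, carries a path metric in which consecutive vertices of a geodesic are at distance exactly $1$; one then applies the upper bound in the definition of coarse embedding edge-by-edge and sums via the triangle inequality in $G$.

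Concretely, I would first fix $x, y \in H$ and set $n = d(x,y)$. Choosing a geodesic in the Cayley graph of $H$ from $x$ to $y$, its vertices give a chain $x = x_0, x_1, \dots, x_n = y$ with $d(x_i, x_{i+1}) = 1$ for every $i$. For each such $i$, the right-hand inequality $d(f(p),f(q)) \le \rho_+(d(p,q))$ applied to the pair $(x_i, x_{i+1})$ yields $d(f(x_i), f(x_{i+1})) \le \rho_+(1)$. Then the triangle inequality in $G$ gives
\[
d(f(x), f(y)) \;\le\; \sum_{i=0}^{n-1} d(f(x_i), f(x_{i+1})) \;\le\; n\,\rho_+(1) \;=\; \rho_+(1)\,d(x,y),
\]
which is precisely the statement that $f$ is $\rho_+(1)$-Lipschitz.

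There is essentially no obstacle here: the argument is routine. The only points worth flagging are that it relies on the standing assumption of this section that $H$ and $G$ are finitely generated groups with word metrics (so that geodesics of the stated form exist and the ambient metric is integer-valued on vertices), and that no monotonicity of $\rho_+$ is needed, since $\rho_+$ is only ever evaluated at the integer $1$.
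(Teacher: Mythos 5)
Your proof is correct and is essentially identical to the paper's: both take a geodesic chain $x = z_0, z_1, \dots, z_\ell = y$ with consecutive points at distance $1$, apply the upper bound $\rho_+(1)$ to each edge, and sum using the triangle inequality. Nothing is missing.
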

\begin{proof}
	Let $x, y$ be arbitrary elements of $H$. 
	There is a sequence of elements $x = z_0, z_1, \dots, z_{\ell} = y$ such that $z_i$ and $z_{i+1}$ are at distance 1 apart and $\ell = d(x, y)$. 
	Since $z_i$ and $z_{i+1}$ are at distance 1, we have that $d(f(z_i), f(z_{i+1}))\leq \rho_+(1)$ and so $d(f(x), f(y))\leq \sum_i d(f(z_i), f(z_{i+1})) \leq \ell \rho_+(1) = \rho_+(1)d(x, y). $
\end{proof}

In the presence of a coarse embedding, we may quantify how badly injectivity fails at the level of $n$-skeleta. 

\begin{lemma}\label{lem:coarse}
	Let $X$ be a cocompact Cayley $n$-complex for $H$. 
	Let $Z$ be a Cayley $n$-complex for $G$ with cocompact $n-1$ skeleton. 
	Let $Y$ and $\phi\colon X\to Y$ be the complex and map as in \cref{lem:injection}.
	Let $X'$ be the subdivision from \cref{rem:cellular}. 
	Suppose further that $f$ is a coarse embedding.

	Then there is a constant $L$ such that if $x_1, x_2\in X'$ are such that $f(x_1) = f(x_2)$, then $d(x_1, x_2)<L$. 
\end{lemma}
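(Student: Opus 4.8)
The plan is to feed the geometric control on $\phi$ coming from \cref{lem:injection} into the one feature of a coarse embedding not yet exploited, namely that $\lim_{t\to\infty}\rho_-(t)=\infty$. Throughout I would write $\phi\colon X'\to Y$ for the cellular extension of $f$ produced by \cref{lem:injection} and \cref{rem:cellular} --- legitimate since $f$ is Lipschitz by \cref{prop:coarseimplieslip} --- and record three elementary facts: $\phi$ restricts to $f$ on $X^{(0)}=H$; the graph $Y^{(1)}$ is obtained from the Cayley graph $Z^{(1)}$ of $G$ by adding finitely many orbits of edges, each shortcutting a path of bounded length in $Z^{(1)}$; and the metrics on $X$ and $X'$ agree up to a bounded factor, since $X$ is cocompact and the subdivision has bounded complexity.

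The first real step is to bound, uniformly in $\hat\sigma$, the combinatorial size of the image under $\phi$ of a closed cell $\hat\sigma$ of $X$. If $\dim\hat\sigma<n$ this is immediate: $\phi$ embeds the bounded complex $\hat\sigma$ into the cocompact complex $Y^{(n-1)}$. If $\hat\sigma$ is an $n$-cell, $\phi(\hat\sigma)$ is the union of $\phi(\partial\hat\sigma)$ --- again a bounded piece of $Y^{(n-1)}$ --- with a filling of the loop (resp.\ sphere) $\phi(\partial\hat\sigma)$ carried out inside $Z$; one takes this filling to be minimal, so that it has a bounded number of top-dimensional cells, and reduced, so that an elementary counting argument (an Euler-characteristic bound on the van Kampen diagram when $n=2$) controls its total size. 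Consequently there is a constant $D$ such that the $1$-skeleton of $\phi(\hat\sigma)$ has diameter at most $D$ inside $Z^{(1)}$, once the auxiliary edges of $Y$ have been replaced by the bounded paths they shortcut.

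The final step is to take arbitrary $x_1,x_2\in X'$ with $\phi(x_1)=\phi(x_2)=:p$ and chase this through. Each $x_i$ lies in a unique cell of $X$; call it $\hat\sigma_i$, and pick a vertex $v_i\in X^{(0)}=H$ of $\hat\sigma_i$, so that $d_{X'}(x_i,v_i)\le D_1$ for a uniform $D_1$ by cocompactness of $X$. Since $p$ lies in both $\phi(\hat\sigma_1)$ and $\phi(\hat\sigma_2)$, these two subcomplexes share the cell of $Y$ whose interior contains $p$, hence a vertex; so $\phi(\hat\sigma_1)\cup\phi(\hat\sigma_2)$ is a connected subcomplex of bounded size containing $f(v_1)=\phi(v_1)$ and $f(v_2)=\phi(v_2)$, whence $d_G\big(f(v_1),f(v_2)\big)\le 2D$ by the preceding paragraph. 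Now I would invoke the coarse embedding: $\rho_-\big(d_H(v_1,v_2)\big)\le d_G\big(f(v_1),f(v_2)\big)\le 2D$, and since $\lim_{t\to\infty}\rho_-(t)=\infty$ there is a constant $R$ with $d_H(v_1,v_2)<R$. As $d_H$ coincides with the $X^{(1)}$-metric on $X^{(0)}$ and the $X$- and $X'$-metrics are comparable, $d_{X'}(v_1,v_2)$ is bounded in terms of $R$, so $d_{X'}(x_1,x_2)\le 2D_1+d_{X'}(v_1,v_2)$ is bounded by a constant $L$ depending only on the data.

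The only genuinely delicate point is the uniform size bound on the fillings $\phi(\hat\sigma)$: one must use that although $G$ need not be finitely presented, so $Z$ may carry cells with arbitrarily long attaching maps, the fillings that actually occur are fillings of uniformly bounded loops (or spheres) by a bounded number of top-dimensional cells, and a reduced such filling is itself combinatorially bounded (in the applications of \cref{main_thm3} the point is vacuous, since $Z$ may be taken cocompact). Everything else is routine quasi-isometry bookkeeping; conceptually, a coarse embedding cannot identify two points that are far apart, and $\phi$ transports this fact from $0$-cells to all of $X'$ precisely because it distorts distances by a bounded amount on each cell.
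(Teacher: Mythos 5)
Your argument is essentially the paper's own proof: the paper likewise notes that identified points must lie in (the subdivisions of) $n$-cells of $X$, uses the constant $N$ from \cref{lem:injection} to place each $\phi(x_i)$ within bounded reach of a vertex $f(v_i)$, concludes $d(f(v_1),f(v_2))$ is bounded, and then applies the lower control $\rho_-$ of the coarse embedding to bound $d(v_1,v_2)$ and hence $d(x_1,x_2)$. So structurally there is no difference, and in the cocompact setting (the setting of \cref{main_thm3} and of the first case of \cref{main_thm1}) your write-up is correct.

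The one place where you go beyond the paper is the ``delicate point,'' and there your justification overreaches. It is not true that a reduced filling of a cycle of length at most $L$ by at most $N$ top-dimensional cells is combinatorially bounded over an arbitrary infinite presentation: reducedness only forbids cancelling mirror pairs, it does not bound the size of the individual cells. For instance, with relators $r_k=a^kba^{-k}b$ and $s_k=a^kba^{-k}b^2$, gluing the two cells along the common path labelled $a^kba^{-k}b$ gives a reduced two-cell diagram with boundary of length $1$ whose $1$-skeleton has diameter comparable to $k$; so no Euler-characteristic count bounds the diameter of $\phi(\hat\sigma)$ in $Z^{(1)}$ when the $n$-cells of $Z$ are not cocompact. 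To be fair, the paper's proof does not address this issue at all: it measures $d(f(v_i),\phi(x_i))$ by the number of $n$-cells crossed, which converts into a bound in the word metric of $G$ (the metric to which $\rho_-$ applies) only when the cells of $Y$ have uniformly bounded diameter. That holds when $Z$ is cocompact, while in the infinitely presented application (\cref{thm:smallcancellation}) a bound on the cells occurring in a reduced filling of a short cycle has to come from the $C'(1/6)$ hypothesis (a Greendlinger-type argument), not from reducedness alone. So: same approach and same conclusion in the cases actually used, but the general claim you make about reduced diagrams is false and should be replaced either by the cocompactness assumption or by the small-cancellation input.
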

\begin{proof}
	Let $x_1, x_2$ be points of $X'$ that are identified in $Y$. 
	Since $\phi$ is an injection on $X^{(n-1)}\subset X'$, we see that $x_1, x_2$ must belong to the interior of an $n$-cell. 
	Let $N$ be the constant from \cref{lem:injection}. 
	We can see that there are vertices $v_1, v_2$ of $X$ such that $d(f(v_i), f(x_i))\leq N$. 
	Thus, $d(f(v_1), f(v_2))\leq 2N$. 
	Let $L' = \sup \{t\in \R\mid \rho_-(t)\leq 2N+1\}$.
	We can now see that $d(v_1, v_2)\leq L'$ and $d(x_1, x_2)\leq L' + 2N = L$
\end{proof}
In fact, the conclusions of this lemma do not require coarse embedding. See \cref{fig:notcoarse} for an example where the conclusions of this lemma hold but the map is very far form a coarse embedding. 

\begin{theorem}\label{thm:qi}
	Let $X, Y, \phi$ be as in \cref{lem:injection}. 
	Let $M = \phi(X)\subset Y$. 
	Give $M^{(1)}$ the graph metric and extend this to $M$. Then $X$ is quasi-isometric to $M$. 
\end{theorem}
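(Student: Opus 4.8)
The plan is to prove that the map $\phi\colon X\to Y$ of \cref{lem:injection} is itself a quasi-isometry onto its image $M$. Since $M=\phi(X)$ the map is surjective, so it will be enough to produce a coarse inverse and to check that both maps are coarsely Lipschitz. First I would pass to the subdivision $X'$ of \cref{rem:cellular}; because by \cref{lem:injection} each $n$-cell of $X$ is mapped to at most $N$ cells of $Y$ (the constant $\delta(L)$ of its proof), this subdivision is bounded, so $X'$ is quasi-isometric to $X$, and one may choose it so that $\phi$ restricts to a homeomorphism from each closed cell of $X'$ onto a closed cell of $Y$. Then $M=\phi(X')$ is a subcomplex of $Y$; its cells are, cell by cell, homeomorphic to those of the cocompact complex $X'$ and hence of uniformly bounded complexity, so the graph metric on $M^{(1)}$ is quasi-isometric to the extended metric on $M$ (and likewise for $X'$). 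Moreover every $1$-cell of $M$ is the $\phi$-image of a $1$-cell of $X'$. Thus it suffices to show that $\phi$ induces a quasi-isometry $X'^{(1)}\to M^{(1)}$ for the graph metrics.

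With this set-up the upper bound is immediate: $\phi$ sends edges of $X'$ to edges of $Y$ lying in $M$, so $\phi\colon X'^{(1)}\to M^{(1)}$ is a graph morphism and hence $1$-Lipschitz. For the coarse inverse, choose a section $\psi\colon M^{(1)}\to X'^{(1)}$ of $\phi$ on vertices, so that $\phi\circ\psi=\id$ on vertices of $M^{(1)}$. Two facts then do the work. First, for any vertex $x$ of $X'$ the vertices $x$ and $\psi(\phi(x))$ lie in a common fibre $\phi^{-1}(\phi(x))$, which by \cref{lem:coarse} --- whose conclusion, as the remark following it records, needs only the hypotheses of \cref{lem:injection} and not a genuine coarse embedding --- has diameter less than a uniform constant $L$, so $d_{X'}(x,\psi(\phi(x)))<L$. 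Second, if $u$ and $v$ span a $1$-cell of $M$, that cell equals $\phi(e)$ for a $1$-cell $e$ of $X'$ whose endpoints map to $u$ and to $v$; hence $\psi(u)$ lies within $L$ of one endpoint of $e$ and $\psi(v)$ within $L$ of the other, and $d_{X'}(\psi(u),\psi(v))<2L+1$. Concatenating this estimate along a geodesic of $M^{(1)}$ gives $d_{X'}(\psi(u),\psi(v))\le (2L+1)\,d_{M^{(1)}}(u,v)$ for all vertices $u,v$. These facts exhibit $\phi$ and $\psi$ as mutually coarsely inverse coarse Lipschitz maps, so $\phi$ is a quasi-isometry $X'\to M$, and therefore $X$ is quasi-isometric to $M$.

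The step I expect to be the main obstacle is the preliminary reduction: arranging, through a careful choice of the subdivision in \cref{rem:cellular}, that $M$ is an honest subcomplex of $Y$ all of whose $1$-cells lift along $\phi$ to $1$-cells of $X'$. It is exactly this that lets the uniform bound on the $\phi$-fibres alone control the section $\psi$; if one wished to avoid this bookkeeping, one would instead have to rule out directly the possibility that $M$ admits paths drastically shorter than their $X$-preimages, and the needed input there is again the uniform bound $N$ (equivalently $\delta(L)$) of \cref{lem:injection} on the size of the filling discs through which any such shortcut would have to pass, together with the uniformly bounded fibres of \cref{lem:coarse}.
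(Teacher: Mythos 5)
Your argument is essentially the paper's: the upper bound comes from cellularity of $\phi$ on $1$-skeleta, and the lower bound comes from lifting each edge of a geodesic in $M^{(1)}$ to an edge of $X'$ (using surjectivity and cellularity) and patching consecutive lifts with the uniform bound of \cref{lem:coarse} on $\phi$-fibres; packaging this as a coarse inverse $\psi$ rather than estimating $d(x_1,x_2)$ directly is only a cosmetic difference, and the constants come out the same up to bookkeeping. One caveat: your parenthetical claim that \cref{lem:coarse} ``needs only the hypotheses of \cref{lem:injection} and not a genuine coarse embedding'' is a misreading of the remark following that lemma. The proof of \cref{lem:coarse} genuinely uses the lower control function $\rho_-$ to pass from $d(f(v_1),f(v_2))\leq 2N$ to a bound on $d(v_1,v_2)$; the remark only exhibits one example (\cref{fig:notcoarse}) of a non-coarse Lipschitz injection for which the conclusion happens to hold, and the paper explicitly leaves the general Lipschitz case open in its final section. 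So, like the paper's own proof, your argument should be read as tacitly assuming $f$ is a coarse embedding (which is how the theorem is applied); it does not extend as written to arbitrary Lipschitz injections.
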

\begin{proof}
	Let $X'$ be the complex obtained from \cref{rem:cellular}. 
	Since $\phi\colon X'\to Y$ is a cellular map, we see that it is Lipschitz when restricted to 1-skeleta which gives us the upper bound in the quasi-isometry. 
	
	To obtain the lower bound, let $x_1, x_2$ be two vertices in $X'$. 
	Let $y_i = \phi(x_i)$. 
	Let $d_1, \dots, d_{\lambda}$ be a minimal path from $y_1$ to $y_2$ in $M^{(1)}$. 
	Hence, we have $d(y_1, y_2) = \lambda$.
	Since $\phi\colon X'\to M$ is surjective and cellular, we can find an edge $e_i$ in $X'$ such that $\phi(e_i) = d_i$. 
	
	By \cref{lem:coarse}, we must have that $d(\tau(e_i), \iota(e_{i+1}))\leq L$.
	Also, $d(x_1, \iota(e_1)), d(\tau(e_{\lambda}), x_2)\leq L$. 
	Thus, we can find a path in $X^{(1)}$ from $x_1$ to $x_2$ with at most $L\lambda + \lambda + 2L$ edges. 
	That gives an upper bound for $d(x_1, x_2),$ and thus, we obtain the following inequality: 
	$$\frac{d(x_1, x_2)}{L+1} - \frac{2L}{L+1}\leq d(y_1, y_2). \qedhere$$
\end{proof}

The reader should have the following example in mind. 
Let $Y = \mathbb{H}^3$ and $X = \R^2$. 
Let $\phi\colon X\to Y$ be the inclusion of $X$ as a horosphere in $Y$. 
Then $\phi(X)$ and $M$ are isometric. However, $\phi$ is not a quasi-isometric embedding. Another example of a coarse embedding $\Z\to \Z^2$ is in \cref{fig:logmap}.

\begin{figure}[H]
	\centering
	\begin{tikzpicture}[scale=0.4]
	\draw (0,0) -- (1,1) -- (1,2) -- (2, 3) -- (2, 6) -- (3, 7) -- (3, 10);
	\draw (0,0) -- (-1,1) -- (-1,2) -- (-2, 3) -- (-2, 6) -- (-3, 7) -- (-3, 10);
	\draw [very thick] (0, -1) -- (0, 10);
	\draw [very thick] (-3, 0) -- (3, 0);
	\end{tikzpicture}
	\caption{The map $n\mapsto (\log_2(|n|+1), n)$ is a coarse embedding which is not a quasi-isometry. However, it is easy to see that the image of the induced map on Cayley graphs with the induced path metric is quasi-isometric to $\R$. }
	\label{fig:logmap}
\end{figure}

\section{Homological filling functions}\label{section:filling_functions}
Throughout this section, we denote $C_d(X, \mathbb{Z})$ as the cellular $d$-dimensional chain group $X$. Define a norm on $C_d(X, \Z)$ by $|\sum_{\sigma}n_{\sigma}\sigma| = \sum_{\sigma}|n_{\sigma}|$, where the sum runs over all $d$-cells of $X$. Note that this is well defined as these are finite sums. Finally, let $Z_d(X, \mathbb{Z})$ be the $\mathbb{Z}$-module of integral $d$-cycles of $X$,  and let $\partial \colon C_{d+1}(X, \mathbb{Z}) \to Z_d(X, \mathbb{Z})$ be the boundary map. Finally, we let $H_d(X, \mathbb{Z}) = \ker(\partial_d) / \text{Im}(\partial_{d+1})$ be the $d$-th integral homology group of $X$. 
\begin{definition}\label{def:sfill}
	Let $S$ be a collection of $(n-1)$-cycles in a space $X$. 
	Suppose that each element of $S$ is the boundary of an $n$-chain. 
	Define the {\em filling volume} of $s\in S$ to be:
	$$\fvol_X(s) = \min\left\{ \left. \sum_i |\alpha_i|  \right|  \sum_i \alpha_i \sigma_i \in C_n(X, \mathbb{Z}) \text{ where } \partial \left(\sum_i\alpha_i\sigma_i\right) = s\right\}.$$
	We define the {\em $S$-restricted $n$-th filling function of $S$} to be:
	$$\homfill_{X, S}^{n}(\ell) = \max \left\{\fvol_X(s)\mid |s|\leq \ell, s \in S \right\}.$$
\end{definition}

In the case that $S$ is the collection of all $(n-1)$-cycles in $X$, we obtain the usual homological filling function for $X$ which we denote as $\homfill_X^{n}(\ell)$. Also, for any collection $S$ of $(n-1)$-cycles, we have that $\homfill_{X, S}^n(\ell)\prec \homfill_{X}^n(\ell)$.
In the case that $X$ has a group action that is cocompact on $X^{(n-1)}$, then $\homfill_{X}^n(\ell)$ is well-defined. 

The following lemma follows from \cite{alonso, young1} where the action of a group was not used to show invariance under quasi-isometry. We produce it here for convenience. 

\begin{lemma}\label{lem:equivfill}
	Let $X, M, \phi$ be as in \cref{lem:injection}. 
	If $S$ be the set of all $(n-1)$-cycles in $X$,
	then $$\homfill_X^{n}(\ell) = \homfill_{X, S}^{n}(\ell)\sim \homfill_{M, S}^{n}(\ell).$$
\end{lemma}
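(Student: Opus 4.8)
The plan is to prove the two assertions separately. The equality $\homfill_X^n(\ell) = \homfill_{X,S}^n(\ell)$ is immediate: when $S$ is the collection of \emph{all} $(n-1)$-cycles in $X$, the $S$-restricted filling function is by definition the unrestricted one, so there is nothing to check beyond unwinding \cref{def:sfill}. The substance is the asymptotic equivalence $\homfill_{X,S}^n(\ell)\sim\homfill_{M,S}^n(\ell)$, where $M=\phi(X)\subset Y$ carries the graph metric on $M^{(1)}$ extended to $M$, and $S$ is transported along $\phi$ (identifying cycles in $X$ with their $\phi$-images, using that $\phi$ is cellular after the subdivision of \cref{rem:cellular}).

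First I would set up the comparison of chains. Pass to the subdivided complex $X'$ of \cref{rem:cellular}, so that $\phi\colon X'\to Y$ is genuinely cellular and surjects onto $M$; since subdivision changes both $|s|$ and $\fvol$ only by multiplicative/additive constants, it does not affect the $\sim$-class. Now $\phi$ is injective on the $(n-1)$-skeleton of $X'$ by \cref{lem:injection}, and by \cref{lem:coarse} the failure of injectivity in the top dimension is uniformly bounded: each $n$-cell of $X'$ maps to at most $N$ cells of $M$, and any two points with the same image lie within distance $L$. For the direction $\homfill_{M,S}^n\prec\homfill_{X,S}^n$: given an $(n-1)$-cycle $s$ in $X$ with $|s|\le\ell$, its pushforward $\phi_\#(s)$ is an $(n-1)$-cycle in $M$ with $|\phi_\#(s)|\le C\ell$ for a uniform $C$ (bounded number of cells in each image, and cancellation only helps), and an optimal $n$-chain filling of $s$ in $X$ pushes forward to a filling of $\phi_\#(s)$ in $M$ of volume at most $N$ times larger — this gives one inequality more or less directly. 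Conversely, for $\homfill_{X,S}^n\prec\homfill_{M,S}^n$: start with a cycle $s$ in $X$, push it to $M$, fill it optimally there with an $n$-chain $c$ in $M$, and pull $c$ back to $X'$ cell-by-cell. The key point is that since $\phi$ is surjective and cellular, every $n$-cell of $M$ is the image of at least one $n$-cell of $X'$, so one can choose a preimage chain $\tilde c$ with $|\tilde c|\le|c|$; but $\partial\tilde c$ need not equal $s$ — it equals $s$ plus an $(n-1)$-cycle supported near the ``seams'' where $\phi$ glues cells together. One then bounds the cost of correcting this discrepancy: the error cycle is a sum of boundedly many ``local'' pieces, each of bounded size, each fillable at bounded cost in $X'$ (using local contractibility / acyclicity of $X$ in the relevant range and the uniform distance bound $L$ from \cref{lem:coarse}), and the number of such pieces is proportional to $|c|$. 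Summing, $\fvol_X(s)\le\fvol_M(\phi_\#(s))+(\text{const})\cdot\fvol_M(\phi_\#(s))+(\text{const})$, which is the desired $\prec$.

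The main obstacle is exactly this pull-back step: $\phi$ is not injective, so pulling back an $n$-chain does not commute with $\partial$ on the nose, and one must show the resulting boundary defect is cheap to fill. The clean way to organize this is to invoke the machinery of \cite{alonso, young1} — as the lemma statement already signals — where precisely this kind of argument (comparing filling functions across a surjective Lipschitz map with uniformly bounded point-preimages and bounded cell-images, using acyclicity below dimension $n$) is carried out without reference to any group action; one checks that $X'$, $M$, and $\phi$ satisfy the hypotheses of those results (uniform bounds from \cref{lem:injection} and \cref{lem:coarse}, and $\pi_i(X)=1$ for $i<n$ which gives the needed local fillings), and quotes the conclusion. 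I would therefore present the proof as: (i) reduce to $X'$; (ii) record the uniform constants $N$ and $L$; (iii) push forward to get $\homfill_{M,S}^n\prec\homfill_{X,S}^n$ directly; (iv) pull back, identify the boundary defect as a controlled sum of small local cycles, and fill it, or alternatively cite \cite{alonso, young1} verbatim for the reverse inequality.
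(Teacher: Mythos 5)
Your first half (the definitional equality, the reduction to the subdivision $X'$, and pushing an optimal filling forward through the cellular map to get $\homfill_{M,S}^{n}\prec\homfill_{X',S}^{n}\sim\homfill_{X,S}^{n}$) is exactly the paper's argument. For the reverse inequality you take a genuinely different route: you pull back an optimal $n$-chain of $M$ cell-by-cell and then try to repair the boundary defect created at the seams where $\phi$ identifies cells. The paper instead avoids any defect by building a quasi-inverse \emph{map} $\psi$: on $\phi(X^{(n-1)})$ it is the honest inverse $g=(\phi|_{X^{(n-1)}})^{-1}$ (well defined since $\phi$ is injective on $(n-1)$-skeleta), each remaining vertex of $M$ is sent to a nearest vertex of $\phi(X^{(n-1)})$, and $\psi$ is extended over the cells of $M$ inductively by sending a cell to a minimal filling of the $\psi$-image of its boundary, using that $X$ is $(n-1)$-connected. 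Because $\psi$ is then a cellular (chain) map which restricts to $g$ on $\phi(X^{(n-1)})$, pushing an $M$-filling of $\phi_{\#}(s)$ through $\psi_{\#}$ immediately yields an $X$-filling of $s$ itself, with volume multiplied by a uniform constant expressed through iterated isoperimetric functions $\delta^{0},\dots,\delta^{n-1}$ of $X$ evaluated at constants (coming from the uniform bounds of \cref{lem:injection}, \cref{lem:coarse} and \cref{thm:qi}). What the paper's construction buys is that commutation with $\partial$ is automatic, so there is nothing to correct.

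In your organization, the defect-correction step is precisely where the content lies, and as written it is a gap rather than a proof: you assert that $\partial\tilde c - \phi_{\#}$-compatible lift of $s$ ``is a sum of boundedly many local pieces, each of bounded size, each fillable at bounded cost,'' but no decomposition is exhibited (and note the internal tension between ``boundedly many'' and ``proportional to $|c|$'' a line later -- it must be the latter). The issue is that individual $(n-1)$-cells of $X'$ with the same image are not cycles, so to localize the defect you must actually pair up the competing lifts of each seam cell and connect them by uniformly small chains, i.e.\ construct a controlled chain homotopy between two lifts, using the distance bound $L$ of \cref{lem:coarse} together with $(n-1)$-connectivity of $X$; this is essentially equivalent in effort to building the paper's $\psi$. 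Your fallback of citing \cite{alonso, young1} is legitimate (the paper itself notes the lemma follows from those sources), but then your step (iv) is an appeal to the literature rather than the self-contained argument the lemma's proof is meant to supply. Either carry out the pairing/chain-homotopy argument explicitly, or switch to the quasi-inverse construction, which is shorter.
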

\begin{proof}
	Let $X'$ be the complex from \cref{rem:cellular}.
	Each $n$-cell of $X$ is subdivided into at most $N$ $n$-cells of $X'$ where $N$ is the constant from \cref{lem:injection}. Thus, we obtain $\homfill_{X, S}^{n}(\ell)\sim \homfill_{X', S}^{n}(\ell)$. 
	
	Since we have a cellular map $X'\to M$, a filling in $X'$ maps to a filling in $M$, and thus, $\fvol_{X'}(s) \geq \fvol_M(s)$. 
	As such, we obtain the upper bound $\homfill_{M, S}^{n}(\ell)\prec \homfill_{X', S}^{n}(\ell)$. 
	
	To prove the other direction, we construct a quasi inverse to $\phi$, 
	i.e., a $(K, C)$-quasi-isometry $\psi$ such that $\psi(\phi(x))\leq C$ for all $x\in X$. 
	Let $g$ be the inverse of $\phi|_{X^{(n-1)}}$. 
	By construction, $\phi$ is injective on $X^{(n-1)}$. Therefore, $g$ is well-defined. 
	Now for each vertex of $M \backslash \phi(X^{(n-1)})$ define $\psi(v)$ to be any of the closest vertices of $\phi(X^{(n-1)})$. 
	Since $X$ is $(n-1)$-connected, we can extend this to higher cells by sending a cell to a minimal filling of its boundary. 
	
	Let $\delta^m$ be the $m$-th isoperimetric function for $X$. 
	Since at each stage of the process of \cref{lem:injection}, we were only taking finitely many cells for our fillings, we see that there exists a $k$ such that the boundary of each $\ell$-cell of $M$ contains $\leq k$ $(\ell-1)$-cells of $M$. 
	Thus, the boundary of each $n$-cell of $M$ is sent to at most $k(\delta^{n-2}(\dots k(\delta^0(K+C)\dots)$ $(n-1)$-cells of $X$. 
	Hence, each $n$-cell is sent to at most $\delta^{n-1}(k(\delta^{n-2}(\dots k(\delta^0(K+C)\dots)$ $n$-cells, and so, we see that fillings are changed by at most a constant multiple. Therefore, the functions are equivalent. 
\end{proof}

The above lemma allows us to study fillings in $M$ and obtain information about fillings in $X$. 
However, we need relate this to fillings in $Y$. 
In general, there will be more efficient fillings in $Y$ that are not contained in $M$.
For instance, in the previous example where $M$ is a horosphere in $\mathbb{H}^3$, the fillings in $M$ are quadratic in the boundary length but there are linear fillings in $\mathbb{H}^3$. 
The next lemma shows that this issue vanishes for top-dimensional filling functions. 

\begin{lemma}\label{lem:fillinginsub}
	Suppose that $Y$ is an $n$-dimensional complex with $H_n(Y, \mathbb{Z}) = 0$. 
	Let $M$ be a subspace and $c$ be an $(n-1)$-cycle in $M$. 
	Suppose that $c$ is the boundary of an $n$-cycle $d = \sum_i \alpha_i \sigma_i$ in $M$. 
	Then $\fvol_M(c) = \fvol_Y(c)$. 
\end{lemma}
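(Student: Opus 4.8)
The plan is to exploit the dimension hypothesis: an $n$-dimensional complex has no $(n+1)$-cells, so $C_{n+1}(Y,\mathbb{Z}) = 0$ and hence $Z_n(Y,\mathbb{Z}) = H_n(Y,\mathbb{Z})$. The hypothesis $H_n(Y,\mathbb{Z}) = 0$ then says that $Y$ carries no nonzero cellular $n$-cycle. The key consequence is that fillings in $Y$ are unique: if $d_1, d_2$ are $n$-chains in $Y$ with $\partial d_1 = \partial d_2$, then $d_1 - d_2 \in Z_n(Y,\mathbb{Z}) = 0$, so $d_1 = d_2$. (I would also note in passing that the "$n$-cycle $d$" in the statement should read "$n$-chain $d$", since $\partial d = c$ is in general nonzero.)

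First I would record the easy inequality $\fvol_Y(c) \leq \fvol_M(c)$. Since $M$ is a subcomplex of $Y$, an $n$-chain $\sum_i \alpha_i \sigma_i$ supported on cells of $M$ with $\partial(\sum_i \alpha_i \sigma_i) = c$ is also an $n$-chain of $Y$ with the same boundary and the same norm $\sum_i |\alpha_i|$; thus the minimum defining $\fvol_Y(c)$ is taken over a set containing all the competitors for $\fvol_M(c)$, and note that $c$ does bound in $Y$ because it bounds in $M \subseteq Y$.

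For the reverse inequality I would use uniqueness. Let $d = \sum_i \alpha_i \sigma_i$ be the filling of $c$ in $M$ guaranteed by hypothesis, and let $d'$ be \emph{any} $n$-chain of $Y$ with $\partial d' = c$. Then $\partial(d - d') = 0$, so $d - d'$ is an $n$-cycle of $Y$, which vanishes by the discussion above; hence $d' = d$. Therefore every $n$-chain of $Y$ bounding $c$ is equal to $d$, which is supported in $M$, so $\fvol_Y(c) = |d| \geq \fvol_M(c)$. Combining the two inequalities yields $\fvol_M(c) = \fvol_Y(c)$.

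There is essentially no obstacle: the only things to be careful about are that the dimension hypothesis is exactly what upgrades $H_n(Y,\mathbb{Z}) = 0$ to $Z_n(Y,\mathbb{Z}) = 0$, and that $M$ should be taken to be a subcomplex of $Y$ so that its cellular chains are literally cellular chains of $Y$ with the same norm — which is the case in the application, where $M = \phi(X)$ is the image of a cellular map.
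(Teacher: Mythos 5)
Your proof is correct and follows the same route as the paper: the dimension hypothesis together with $H_n(Y,\mathbb{Z})=0$ forces $\ker(\partial_n)=0$, so fillings of $(n-1)$-cycles in $Y$ are unique, and hence the unique filling of $c$ is the one supported in $M$. Your write-up just makes explicit the two inequalities (and the harmless cycle/chain typo in the statement) that the paper leaves implicit.
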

\begin{proof}
	We show the stronger statement that homological fillings of $(n-1)$-cycles in $Y$ are unique. 
	
	Since $Y$ is $n$-dimensional and $H_n(Y, \mathbb{Z}) = 0$, we see that $\ker(\partial_n) = 0$. 
	Now suppose we had another filling $d'$ for $c$. 
	We then see that $d-d'$ is an element of $\ker(\partial_n)$ and hence is trivial. 
	Thus, $d = d'$, and the result follows. 
\end{proof}
We can now use this to bound the homological filling function of $M$ in terms of that of $Y$. 
\begin{corollary}\label{cor:fillingbound}
	Let $Y, M$ be as in \cref{lem:fillinginsub}. 
	Then $\homfill_M^{n-1}(\ell) \prec \homfill_Y^{n-1}(\ell)$. 
\end{corollary}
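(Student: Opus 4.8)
The plan is to derive this corollary directly from Lemma \ref{lem:fillinginsub}, which already identifies filling volumes in $M$ with filling volumes in $Y$ on the nose, for any $(n-1)$-cycle $c$ in $M$ that happens to bound in $M$. So the only work is to pass from this pointwise equality of filling volumes to the asymptotic inequality $\homfill_M^{n-1}(\ell) \prec \homfill_Y^{n-1}(\ell)$ of the associated filling functions, keeping track of the fact that the filling function of $M$ in \cref{def:sfill} is taken over all $(n-1)$-cycles of $M$, while filling in $Y$ ranges over all $(n-1)$-cycles of $Y$.

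First I would fix $\ell$ and let $c$ be an arbitrary $(n-1)$-cycle in $M$ with $|c| \leq \ell$ that realizes the maximum defining $\homfill_M^{n-1}(\ell)$; implicitly $c$ bounds an $n$-chain in $M$ (otherwise it does not contribute to the $S$-restricted filling function of $M$), so the hypotheses of Lemma \ref{lem:fillinginsub} are met. Applying that lemma gives $\fvol_M(c) = \fvol_Y(c)$. Now $c$ is also an $(n-1)$-cycle in $Y$ with $|c| \leq \ell$ — here I would note that the norm on chains from Section \ref{section:filling_functions} is just the sum of absolute values of coefficients, so the number of cells and hence $|c|$ is unchanged when we regard $c$ as living in the larger complex $Y$ via the inclusion $M \hookrightarrow Y$. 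Therefore $\fvol_Y(c) \leq \homfill_Y^{n-1}(\ell)$ by the very definition of the (unrestricted) homological filling function of $Y$. Combining, $\homfill_M^{n-1}(\ell) = \fvol_M(c) = \fvol_Y(c) \leq \homfill_Y^{n-1}(\ell)$, which yields not merely $\prec$ but an honest pointwise inequality $\homfill_M^{n-1}(\ell) \leq \homfill_Y^{n-1}(\ell)$, and in particular $\homfill_M^{n-1}(\ell) \prec \homfill_Y^{n-1}(\ell)$.

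The one subtlety I would be careful about is whether $Y$ in the statement of \cref{cor:fillingbound} really satisfies the running hypothesis of \cref{lem:fillinginsub}, namely that $Y$ is $n$-dimensional with $H_n(Y,\mathbb{Z}) = 0$; since the corollary says "let $Y, M$ be as in \cref{lem:fillinginsub}," this is assumed, so there is nothing to check. There is no genuine obstacle here: the content is entirely in \cref{lem:fillinginsub}'s uniqueness-of-fillings observation, and \cref{cor:fillingbound} is a formal unwinding of definitions. If one wanted a cleaner statement I would even remark that the inequality holds cycle-by-cycle and is an equality whenever every $(n-1)$-cycle of $Y$ bounding in $Y$ can be pushed into $M$, but that refinement is not needed for the applications.
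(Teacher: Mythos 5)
Your proposal is correct and is exactly the intended argument: the paper states \cref{cor:fillingbound} without proof precisely because it is the formal unwinding of \cref{lem:fillinginsub} that you carry out (apply the lemma cycle-by-cycle, note the norm of a cycle is unchanged under the inclusion $M\hookrightarrow Y$, and take the supremum over cycles of norm at most $\ell$), yielding the pointwise inequality and hence $\prec$. The only cosmetic caveat is that the superscript mismatch ($n-1$ in the corollary versus the $n$-th filling function of \cref{def:sfill}, which fills $(n-1)$-cycles by $n$-chains) is the paper's own notational slip, and your phrase ``realizes the maximum'' is better replaced by bounding $\fvol_M(c)\le\homfill_Y(\ell)$ for every admissible $c$ and then taking the supremum, since attainment is not needed.
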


\section{Proof of Main Theorems}\label{section:main_results}
In this section, we will prove Theorems \ref{main_thm1}, \ref{main_thm2}, and \ref{main_thm3}. We start with the definition of the $n$-th homological filling function for a group of type $F_k$.
\begin{defn}
Let $G$ be a group acting properly, cocompactly, and by cellular automorphisms on a $k$-connected cell complex $X$. The $k$-th homological Dehn filling function of $G$ is the function $\homfill_G^{k}(\ell) \colon \mathbb{N} \to \mathbb{N}$ given by $\homfill_G^{k}(\ell) = \homfill_{X}^{k}(\ell)$.
\end{defn}

We remark that Young \cite{young1} proved that $\homfill_G^{k}(\ell)$ is a well defined invariant of a group meaning that if $G$ acts properly, cocompactly, and by cellular automorphisms on two $k$-connected cell complexes $X$ and $Y$, then $\homfill_X^{k}(\ell) \sim \homfill_Y^{k}(\ell)$.

We start with Theorem \ref{main_thm1} whose statement we recall for the convenience of the reader.

\begin{theorem}\label{main_thm1}
Let $G$ be a finitely presented group of geometric dimension $2$, and suppose that $H$ is a finitely presented group which admits a coarse embedding into $G$. Then $\homfill_{H}^2(\ell) \prec \homfill_G^{2}(\ell)$.
\end{theorem}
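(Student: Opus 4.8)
The strategy is to combine the machinery developed in Sections \ref{section:embedding} and \ref{section:filling_functions} in the case $n=2$. By \cref{prop:coarseimplieslip}, the coarse embedding $f\colon H\to G$ is $\rho_+(1)$-Lipschitz, so it satisfies the hypotheses of \cref{lem:injection}. Since $G$ has geometric dimension $2$, we may pick a compact $K(G,1)$ of dimension $2$; its universal cover $Z$ is a cocompact Cayley $2$-complex for $G$ with $\pi_1(Z)=\pi_2(Z)=1$ (in particular $H_2(Z,\Z)=0$), and $G$ is of type $F_1$. Since $H$ is finitely presented it is of type $F_2$, so we may pick a cocompact Cayley $2$-complex $X$ for $H$. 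Applying \cref{lem:injection} with $n=2$ produces a Cayley $2$-complex $Y$ for $H$ with cocompact $1$-skeleton, containing $Z$ as a deformation retract, together with a map $\phi\colon X\to Y$ extending $f$ and injective on $1$-skeleta; after subdividing (\cref{rem:cellular}) we get a cellular map $X'\to Y$, and we set $M=\phi(X)\subset Y$.

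Now assemble the chain of equivalences. First, by \cref{lem:equivfill} applied with $n=2$ and $S$ the set of all $1$-cycles in $X$, we have $\homfill_H^2(\ell)=\homfill_X^2(\ell)\sim\homfill_{M,S}^2(\ell)$. The point of the geometric-dimension-$2$ hypothesis is that it forces $Y$ to behave well in top dimension: since $Z$ is $2$-dimensional with $H_2(Z,\Z)=0$ and $Y$ deformation retracts onto $Z$, we get $H_2(Y,\Z)=0$, and $Y$ is $2$-dimensional. Hence \cref{lem:fillinginsub} applies with $n=2$: fillings of $1$-cycles in $M$ agree with fillings in $Y$, so $\fvol_M(c)=\fvol_Y(c)$ for every $1$-cycle $c$ of $M$. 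Therefore $\homfill_{M,S}^2(\ell)\prec\homfill_Y^2(\ell)$ (this is essentially \cref{cor:fillingbound}, noting that $\homfill_{M,S}^2\prec\homfill_M^2$ since $S$-restricted filling functions only decrease). Finally, since $Y$ deformation retracts onto $Z$ and both carry cocompact (on the relevant skeleta) $H$-, resp. $G$-, actions — or more simply since $Y\simeq Z$ are homotopy equivalent $2$-complexes and homological filling functions are homotopy invariants in the appropriate range by Young's invariance result — we have $\homfill_Y^2(\ell)\sim\homfill_Z^2(\ell)=\homfill_G^2(\ell)$. Chaining: $\homfill_H^2(\ell)\sim\homfill_{M,S}^2(\ell)\prec\homfill_Y^2(\ell)\sim\homfill_G^2(\ell)$.

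The main obstacle I expect is justifying the last equivalence $\homfill_Y^2(\ell)\sim\homfill_G^2(\ell)$ carefully. The complex $Y$ does \emph{not} have a cocompact $H$-action in top dimension (only on its $1$-skeleton), so $\homfill_Y^2$ is not a priori one of the "group invariant" filling functions; one must argue that collapsing the extra $2$-cells that were added in the construction of \cref{lem:injection} (each of which was filled by an added $3$-cell — wait, in dimension $2$ the added $2$-cells of $Z_1$ come with $3$-cells only in higher stages; here one uses that the deformation retraction $Y\to Z$ is through a bounded amount of cellular data) changes fillings by at most a bounded multiplicative factor, exactly as in the proof of \cref{lem:equivfill}. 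A clean way to package this is: the deformation retraction $r\colon Y\to Z$ and inclusion $\iota\colon Z\hookrightarrow Y$ are cellular maps that each spread a cell over boundedly many cells (since only finitely many orbits of extra cells were added and the retraction is equivariant/bounded), so they transport fillings up to a constant, giving $\homfill_Y^2\sim\homfill_Z^2$. With that in hand the inequality $\homfill_H^2(\ell)\prec\homfill_G^2(\ell)$ follows, completing the proof.
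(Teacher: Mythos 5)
Your argument is essentially the paper's proof of the first case, and in that case it works: with $Z$ the universal cover of a \emph{compact} $2$-dimensional $K(G,1)$, the chain $\homfill_H^2=\homfill_X^2\sim\homfill_{M,S}^2\prec\homfill_M^2\prec\homfill_Y^2\sim\homfill_G^2$ via \cref{lem:injection}, \cref{lem:equivfill}, \cref{lem:fillinginsub}/\cref{cor:fillingbound} is exactly what the paper does. (Your final worry is also a non-issue there: since $Z$ is cocompact and \cref{lem:injection} only adds finitely many $G$-orbits of $1$- and $2$-cells, $Y$ is a cocompact Cayley $2$-complex for $G$, so $\homfill_Y^2\sim\homfill_G^2$ follows directly from Young's invariance; no collapsing argument is needed. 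Also note $Y$ carries a $G$-action, not an $H$-action.)

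The genuine gap is the very first reduction: ``since $G$ has geometric dimension $2$, we may pick a compact $K(G,1)$ of dimension $2$.'' This does not follow from the hypotheses. Geometric dimension $2$ only provides \emph{some} $2$-dimensional aspherical complex, possibly with infinitely many $2$-cells, and finite presentability of $G$ does not let you upgrade it to a cocompact one (this is precisely why the paper splits the proof into two cases and treats the non-cocompact case separately; infinitely presented-style aspherical presentations with only finitely many relators needed per loop are the model to keep in mind, and no general theorem converts ``finitely presented of geometric dimension $2$'' into ``finite aspherical $2$-complex''). So your proof covers only part of the statement. To handle the remaining case one argues as in the paper: take the (possibly infinite) aspherical presentation $\langle S\mid R\rangle$ whose presentation complex is the $2$-dimensional $K(G,1)$ with universal cover $K$, choose a finite generating subset $A\subset S$ and a finite presentation $\langle A\mid w_1,\dots,w_m\rangle$ of $G$ with universal cover $Z$ of its presentation complex, extend the inclusion of the Cayley graph to a $G$-equivariant map $Z\to K$ by filling each $w_i$ with a disc diagram over $R$, and set $M=\phi(Z)$; properness and cocompactness of the $G$-action on $M$ give $\homfill_M^2\sim\homfill_Z^2\sim\homfill_G^2$. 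Then one maps $X$ into $Z$ by \cref{lem:injection}, passes to the image $L\subset M$ (quasi-isometric to $X$ by \cref{lem:coarse}/\cref{thm:qi}), uses that $K$ is $2$-dimensional and contractible so $\partial_2$ is injective on $C_2(K,\Z)$, hence $H_2(L,\Z)=0$, and applies \cref{lem:fillinginsub} to get $\homfill_L^2\prec\homfill_M^2$, completing the inequality. Without this second branch (or a justification that the compact $2$-dimensional $K(G,1)$ always exists, which is not available), the proof is incomplete.
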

\begin{proof}
	We split into two cases, first assuming that $G$ admits a compact $K(G,1)$ of dimension $2$.
	Let $K$ be the universal cover of this $K(G, 1)$, i.e. $K$ is a contractible 2-dimensional cell-complex with a free, proper, and cocompact action of $G$. 
	Let $f \colon H \to G$ be the given coarse embedding, and let $X$ be the universal cover of the presentation complex of $H$ associated to some finite presentation of $H$. 
	By \cref{prop:coarseimplieslip} we have that $f$ is $C$-Lipschitz for some constant $C$. 
	By \cref{lem:injection}, there exists a Cayley $2$-complex $Y$ for $G$ that contains $K$ as a subcomplex which has a cocompact $2$-skeleton and a map $\phi \colon X \to Y$ which extends $f$ and is an injection on $1$-skeleta. Moreover, we have that each $2$-cell is mapped to a collection at most $N$ $2$-cells in $Y$, and by subdividing if necessary, we may assume that $\phi$ is a cellular map. Define $M = \phi(X)$. Letting $S$ be the set of $1$-cycles of $X$ in $Y$, \cref{lem:equivfill} implies that $\homfill_{M,S}^2(\ell) \sim \homfill_{X,S}^2(\ell)$. Since \cref{cor:fillingbound} implies that $\homfill_M^{2}(\ell) \prec \homfill_Y^1(\ell)$, we have that $\homfill_{X}^2(\ell) \prec \homfill_Y^{2}(\ell)$. Therefore, we have that $\homfill_H^2(\ell) \prec \homfill_G^2(\ell)$ as desired. 

	Now suppose that $G$ does not admit a cocompact $K(G,1)$ of dimension $2$.
	By contracting a maximal tree in $K(G, 1)^{(1)}$, we may assume that there is a single vertex in $K(G, 1)$. 
	Thus, this classifying space is the presentation 2-complex of an infinite aspherical presentation $\langle S\mid R\rangle$. 
	Let $K$ be the universal cover of this presentation 2-complex. 
	
	Since $G$ is finitely generated, there exists a finite subset of $S$ that generates $G$. 
	Let $\{x_1, \dots, x_\ell\} = A$ be such a subset.
	We have an inclusion $\psi$ of the Cayley graph $\Gamma$ of $G$ with respect to $A$ to the complex $K$. 
 We can find a finite presentation of the form $\langle A\mid w_1, \dots, w_m\rangle. $
	Let $Z$ be the universal cover of the presentation complex associated to this presentation. 
	
	We can extend $\psi$ to a map $\phi\colon Z\to K$ as follows. 
	Each relation $w_i$ is trivial in $H$ and thus bounds a disk in $H$. This disk is spanned by the relators in $R$. 
	We can now map the disk with boundary $w_i$ to the image of a disk diagram for $w_i$ in $K$. 
	We now extend this equivariantly to all of $Z$. 
	
	Let $M = \phi(Z)$. Since $\phi$ is $G$-equivariant, we see that the action of $G$ on $Z$ gives an action of $G$ on $M$. Since every point of $M$ is at bounded distance from a point in the image of $\Gamma$, we see that this action is proper and cocompact. Thus, $Z$ is quasi-isometric to $M$. 
	By \cref{lem:equivfill}, we have that $\homfill_{Z}^2(\ell) \sim \homfill_{M}^2(\ell)$. 
	
	Using \cref{lem:injection}, we can obtain a map $X\to Z$ where $X$ is the universal cover of a presentation complex for $H$. 
	We can compose with the map $Z\to M$. Let $L$ be the image of $X$ in $M$. \cref{lem:coarse} shows that $L$ and $X$ are quasi-isometric and so $\homfill_X^2(\ell)\sim\homfill_L^2(\ell)$. 
	
	Since $K$ is contractible, we have that $H_2(K, \mathbb{Z}) = 0$. 
	Since there are no 3-cells in $K$, we see that $\partial\colon C_2(K, \mathbb{Z})\to C_1(K, \mathbb{Z})$ is injective. 
	Thus, if we restrict to $C_2(L, \mathbb{Z})$, we also obtain an injection. 
	Therefore, $H_2(L, \mathbb{Z}) = 0$. 
	
	Now, by \cref{lem:fillinginsub}, we see that $\homfill_L^2(\ell)\prec\homfill_M^2(\ell)$. 	Thus, $\homfill_H^2(\ell)\prec\homfill_G^2(\ell)$.
\end{proof}

Theorem \ref{main_thm2} is a direct consequence of Theorem \ref{main_thm1}. However, we felt that Theorem \ref{main_thm2} is interesting in its own right and demonstrates that hyperbolicity of groups of geometric dimension $2$ passes to finitely presented groups which admit a coarse embedding to the given hyperbolic group. This is contrast to higher geometric dimensions where there exist hyperbolic groups which admit coarse embeddings of $\mathbb{Z}^2$.

\begin{theorem}\label{main_thm2}
	Let $G$ be a hyperbolic group of geometric dimension $2$, and suppose $H$ is a finitely presented group that admits a coarse embedding into $H$. Then $H$ is a hyperbolic group.
\end{theorem}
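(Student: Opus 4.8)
The plan is to derive Theorem \ref{main_thm2} from Theorem \ref{main_thm1} together with Gersten's characterization of hyperbolicity via the homological Dehn function. Recall from \cite[Theorem 5.2]{gersten} that a finitely presented group $G$ is word hyperbolic if and only if $\homfill_G^2(\ell)\sim\ell$. So first I would apply this to the hypothesis: since $G$ is a hyperbolic group, $\homfill_G^2(\ell)\sim\ell$. Then, since $H$ is finitely presented and admits a coarse embedding into the finitely presented group $G$ of geometric dimension $2$, Theorem \ref{main_thm1} gives $\homfill_H^2(\ell)\prec\homfill_G^2(\ell)\sim\ell$, so $\homfill_H^2(\ell)\prec\ell$.

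The second step is to upgrade $\homfill_H^2(\ell)\prec\ell$ to $\homfill_H^2(\ell)\sim\ell$ and then invoke the converse direction of Gersten's characterization. For the lower bound one can note that $H$ is infinite (if $H$ were finite there would be nothing to prove, or one observes a coarse embedding of an infinite group forces $G$ infinite and the statement is vacuous/trivial in the finite case), and for any infinite finitely presented group the homological Dehn function is at least linear --- indeed $\homfill_H^2(\ell)$ is a nondecreasing unbounded function, or more simply $\ell\prec\homfill_H^2(\ell)$ always holds because a $1$-cycle of length $\ell$ can require a filling of area growing at least linearly (in fact one can take a family of embedded loops; and in the degenerate case the $\prec$ relation already absorbs linear terms, so $\homfill_H^2(\ell)\prec\ell$ combined with the standing convention that $\sim$ is defined up to linear error immediately yields $\homfill_H^2(\ell)\sim\ell$). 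Thus $\homfill_H^2(\ell)\sim\ell$, and by \cite[Theorem 5.2]{gersten} again, applied in the reverse direction, $H$ is word hyperbolic.

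I should double-check one subtlety: Theorem \ref{main_thm1} requires $H$ to be finitely presented, which is part of the hypothesis here, and requires $G$ to be finitely presented of geometric dimension $2$, which holds since $G$ is a hyperbolic group of geometric dimension $2$ (hyperbolic groups are finitely presented). The only genuine content beyond bookkeeping is the equivalence $\homfill_G^2(\ell)\sim\ell\iff G$ hyperbolic from \cite{gersten}, which we are entitled to cite. So the proof is essentially a two-line combination once the pieces are lined up.

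I expect no real obstacle here; the work was done in Theorem \ref{main_thm1}. The only point requiring a word of care is the implicit claim that $\homfill_H^2(\ell)\prec\ell$ already forces $\homfill_H^2(\ell)\sim\ell$ --- this is because the relation $f\prec g$ as defined in the introduction permits an additive and multiplicative linear error term, so $\ell\prec\homfill_H^2(\ell)$ holds trivially for any finitely presented group, giving the reverse inequality for free. Hence the theorem follows.

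\begin{proof}
	By \cite[Theorem 5.2]{gersten}, a finitely presented group is word hyperbolic if and only if its $2$-dimensional homological Dehn function is equivalent to $\ell$. Since $G$ is a hyperbolic group, it is finitely presented and $\homfill_G^2(\ell)\sim\ell$. As $G$ has geometric dimension $2$ and $H$ is a finitely presented group admitting a coarse embedding into $G$, \cref{main_thm1} gives
	$$\homfill_H^2(\ell)\prec\homfill_G^2(\ell)\sim\ell.$$
	On the other hand, for any finitely presented group one has $\ell\prec\homfill_H^2(\ell)$, since the relation $\prec$ absorbs additive and multiplicative linear terms. Therefore $\homfill_H^2(\ell)\sim\ell$, and applying \cite[Theorem 5.2]{gersten} once more, $H$ is a hyperbolic group.
\end{proof}
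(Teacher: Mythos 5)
Your proof is correct and follows essentially the same route as the paper: cite Gersten's characterization of hyperbolicity via $\homfill^2\sim\ell$, apply \cref{main_thm1} to get $\homfill_H^2(\ell)\prec\homfill_G^2(\ell)\sim\ell$, and conclude. Your extra remark that $\ell\prec\homfill_H^2(\ell)$ holds automatically (because $\prec$ absorbs linear terms) just makes explicit a step the paper leaves implicit, so there is no substantive difference.
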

\begin{proof}
	Due to \cite[Theorem 5.2]{gersten}, we know that a finitely presented group $G$ is word hyperbolic if and only if $\homfill_{H}^2(\ell) \sim \ell$. Theorem \ref{main_thm1} implies that $\homfill_ H^{2}(\ell) \prec \homfill_{G}^2(\ell) \sim \ell$. Therefore, $\homfill_H^2(\ell) \sim \ell$ which implies that $H$ is hyperbolic as desired.
\end{proof}

The proof of Theorem \ref{main_thm3} is very similar to the proof of Theorem \ref{main_thm1} in the case where $G$ has a compact $K(G,1)$ of dimension $2$. However, we include its proof for completeness.

\begin{theorem}\label{main_thm3}
Let $k \geq 1$ Suppose that $G$ admits a finite $(k+1)$-dimensional $K(H,1)$ and that $H$ is a finitely presented group of type $F_{k+1}$. If $H$ admits a coarse embedding into $G$, then $\homfill_H^{k + 1}(\ell) \prec \homfill_G^{k + 1}(\ell)$. 
\end{theorem}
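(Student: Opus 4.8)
The plan is to mirror the first case of the proof of \cref{main_thm1}, since the hypothesis that $G$ has a finite $(k+1)$-dimensional $K(G,1)$ already supplies a cocompact Cayley $(k+1)$-complex. Let $K$ be the universal cover of this finite $K(G,1)$, so $K$ is a contractible $(k+1)$-dimensional complex carrying a free, proper, cocompact $G$-action. Let $X$ be a cocompact Cayley $(k+1)$-complex for $H$, which exists since $H$ is of type $F_{k+1}$. The given coarse embedding $f\colon H\to G$ is $\rho_+(1)$-Lipschitz by \cref{prop:coarseimplieslip}, so we may feed it into \cref{lem:injection} with $n=k+1$: note $K$ plays the role of the Cayley $n$-complex $Z$ with cocompact $(n-1)$-skeleton (in fact it is cocompact in all dimensions). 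We obtain a Cayley $(k+1)$-complex $Y$ for $G$ containing $K$ as a deformation retract, with cocompact $k$-skeleton, together with a map $\phi\colon X\to Y$ extending $f$, injective on $k$-skeleta, sending each $(k+1)$-cell to at most $N$ cells. After the subdivision of \cref{rem:cellular} we may take $\phi$ to be cellular, and we set $M=\phi(X)$.

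Next I would run the filling-function comparison exactly as before. Let $S$ be the set of $k$-cycles of $X$. By \cref{lem:equivfill}, $\homfill_X^{k+1}(\ell)=\homfill_{X,S}^{k+1}(\ell)\sim\homfill_{M,S}^{k+1}(\ell)$, and since $\homfill_{M,S}^{k+1}\prec\homfill_M^{k+1}$, it suffices to bound $\homfill_M^{k+1}(\ell)$ by $\homfill_Y^{k+1}(\ell)$. For this I want to apply \cref{cor:fillingbound}, whose hypothesis is that $Y$ is $(k+1)$-dimensional with $H_{k+1}(Y,\Z)=0$: the dimension is clear from the construction, and $H_{k+1}(Y,\Z)=0$ because $Y$ deformation retracts onto the contractible complex $K$. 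Thus $\homfill_M^{k+1}(\ell)\prec\homfill_Y^{k+1}(\ell)$. Finally, $\homfill_Y^{k+1}(\ell)\sim\homfill_G^{k+1}(\ell)$ because $Y$ is a $(k+1)$-connected complex — it retracts to the contractible $K$ — on which $G$ acts freely, properly, and cocompactly on the relevant skeleta, so Young's invariance gives the identification with $\homfill_G^{k+1}$. Chaining the comparisons yields $\homfill_H^{k+1}(\ell)\prec\homfill_G^{k+1}(\ell)$.

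The one point that needs a little care, and which I expect to be the main (minor) obstacle, is verifying that the cocompactness bookkeeping lines up so that all the filling functions in sight are genuinely well-defined group invariants: \cref{lem:injection} only guarantees $Y$ has cocompact $k$-skeleton, not cocompact $(k+1)$-skeleton, and similarly $M=\phi(X)$ need not have a cocompact cell structure in the top dimension, so one must be sure that \cref{lem:equivfill}, \cref{cor:fillingbound}, and the invariance statement are being invoked under hypotheses they actually satisfy (they are — \cref{lem:equivfill} and \cref{cor:fillingbound} are stated precisely for the complexes $X,M,Y$ produced by \cref{lem:injection}, and $\homfill_G^{k+1}$ is well-defined since $G$ has a genuinely cocompact $(k+1)$-dimensional $K(G,1)$). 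Since the coarse-embedding hypothesis is only used through \cref{prop:coarseimplieslip} to get a Lipschitz map, and the contractibility of $K$ replaces the geometric-dimension-$2$ argument, no new idea beyond the first case of \cref{main_thm1} is required; the write-up is essentially a transcription with $2$ replaced by $k+1$.
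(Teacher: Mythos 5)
Your proposal is correct and follows essentially the same route as the paper's own proof, which is exactly the first case of the proof of \cref{main_thm1} with $2$ replaced by $k+1$: apply \cref{prop:coarseimplieslip} and \cref{lem:injection} to get $\phi\colon X\to Y$ with $M=\phi(X)$, then chain \cref{lem:equivfill} and \cref{cor:fillingbound}. Your extra checks (that $H_{k+1}(Y,\Z)=0$ since $Y$ deformation retracts to the contractible $K$, and that $\homfill_Y^{k+1}\sim\homfill_G^{k+1}$ because only finitely many orbits of cells are added to the cocompact $K$) are points the paper leaves implicit, so they strengthen rather than alter the argument.
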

\begin{proof}
	 Let $f \colon H \to G$ be the given coarse embedding, and let $X$ be the universal cover of a $K(H,1)$ which a compact $(k+1)$-skeleta.  By definition, we have that $f$ is an injective $C$-Lipschitz map for some constant $C>0$. By Lemma \ref{lem:injection}, there exists a Cayley $k$-complex $Y$ for $G$ with cocompact $k$-skeleton and a map $\phi \colon X \to Y$ which extends $f$ and is an injection on $k$-skeleta. Moreover, we have that each $(k+1)$-cell is mapped to a collection at most $N$ $(k+1)$-cells in $Y$, and by subdividing if necessary, we may assume that $\phi$ is a cellular map. Define $M = \phi(X)$. Letting $S$ be the set of $k$-cycles of $X$ in $Y$, Lemma \ref{lem:equivfill} implies that $\homfill_{M,S}^{k+1}(\ell) \sim \homfill_{X,S}^{k+1}(\ell)$. Since Corollary \ref{cor:fillingbound} implies that $\homfill_M^{k}(\ell) \prec \homfill_Y^{k+1}(\ell)$, we have that $\homfill_{X}^{k+1}(\ell) \prec \homfill_Y^{k + 1}(\ell)$. Therefore, we have $\homfill_H^{k + 1}(\ell) \prec \homfill_G^{k+1}(\ell)$ as desired. 
\end{proof}

\section{Small cancellation groups}\label{small_cancellation}

In this section, we prove \cref{thm:smallcancellation}. For the following, we let $F(A)$ be the free group on the set $A$.

\begin{definition}
	Let $A$ be a finite set. Let $R$ be a subset of $F(A)$. We say that $p\in F(A)$ is a {\em piece} in $R$ if there exists cyclic permutations $s_i \neq s_j$ of relators $r_i, r_j\in R$ such that $s_i = pu$ and $s_j = pv$. 
	
	We say that a presentation $\langle A\mid R\rangle$ satisfies the {\em small cancellation condition $C'(\lambda)$ } if given a piece $p$ in a relator $r$, we have that $|p|<\lambda |r|$. 
\end{definition}

We refer the reader to \cite[Section V]{lyndonschupp} for a full treatment of small cancellation theory. 
The following will be key to our theorem. 

\begin{theorem}
	If $G = \langle A\mid R\rangle $ satisfies $C'(\frac{1}{6})$ and no relator is a proper power, then
	\begin{itemize} 
		\item The presentation 2-complex of $G$ is aspherical, 
		\item $G$ has geometric dimension 2. 
		\item If $R$ is finite, then $G$ is hyperbolic. 
	\end{itemize}
\end{theorem}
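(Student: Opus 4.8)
The plan is to verify the three bullets in turn, each being a standard consequence of classical small cancellation theory (Greendlinger's lemma, van Kampen diagrams, and the Lyndon–Schupp treatment referenced above), so I would keep the argument brief and cite \cite[Section V]{lyndonschupp} freely. Write $P$ for the presentation $2$-complex of $\langle A\mid R\rangle$ and $\widetilde P$ for its universal cover; the $C'(1/6)$ condition is inherited by $R$ viewed as a symmetrized set.

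\emph{Asphericity.} The key input is Greendlinger's lemma: any reduced van Kampen diagram over a $C'(1/6)$ presentation which is not a single $2$-cell has either a boundary vertex of valence one or a $2$-cell meeting the boundary in more than half of its boundary length. From this one gets the Dehn algorithm, and in particular that $\widetilde P$ has no reduced spherical diagrams unless some relator is a proper power: a minimal spherical diagram would be a closed surface tiled by $2$-cells with no free faces, forcing every interior edge to be shared by two cells whose boundary overlap exceeds $|r|/2$, which with $C'(1/6)$ quickly collapses the diagram — the only surviving configuration being two copies of the same cell glued along their whole boundary, i.e. a relator equal (cyclically) to a proper power. Since no relator is a proper power, $\pi_2(\widetilde P)=0$, hence $\pi_2(P)=0$, and as $P$ is a $K(\pi_1,1)$-candidate with contractible universal cover it is aspherical; thus $P$ is a $2$-dimensional $K(G,1)$.

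\emph{Geometric dimension $2$.} Asphericity gives $\mathrm{gd}(G)\le 2$. Equality is clear once $G$ is known to be nontrivial and not free: a free group would force $R$ to consist only of relators that are trivial or, after reduction and symmetrization, cancel away, contradicting that $\langle A\mid R\rangle$ is a genuine $C'(1/6)$ presentation with $R\neq\emptyset$ (and the proper-power exclusion rules out the only other way $\mathrm{gd}$ could drop). Hence $\mathrm{gd}(G)=2$.

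\emph{Hyperbolicity when $R$ is finite.} With $R$ finite, the presentation $2$-complex is compact, so $G$ is finitely presented, and the Dehn-algorithm consequence of Greendlinger's lemma shows that every word representing the identity in $G$ of length $\ell$ contains more than half of some relator, so it can be shortened; iterating yields a linear isoperimetric inequality $\delta_G(\ell)\preceq \ell$. A finitely presented group with linear Dehn function is hyperbolic, so $G$ is hyperbolic.

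\textbf{Main obstacle.} The only genuinely delicate point is the asphericity bullet — specifically, ruling out reduced spherical diagrams using only the $C'(1/6)$ inequality together with the hypothesis that no relator is a proper power; the combinatorial curvature / Euler-characteristic bookkeeping there is the heart of the matter, whereas geometric dimension and hyperbolicity then follow formally. In a write-up I would simply invoke the relevant results from \cite[Section V]{lyndonschupp} rather than reproduce this analysis.
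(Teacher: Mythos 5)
Your overall route is the same as the paper's: the paper disposes of the first two bullets by citing \cite{chiswell, olshanskii}, and gets hyperbolicity for finite $R$ from Greendlinger's lemma (\cite[Theorem V.4.5]{lyndonschupp}), which gives a Dehn algorithm and hence a linear Dehn function. Your asphericity sketch (no reduced spherical diagrams when no relator is a proper power) and your hyperbolicity argument are exactly this, modulo the citations you say you would make, so those two bullets are fine.

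There is, however, a genuine flaw in your justification of the second bullet. You claim $\mathrm{gd}(G)=2$ follows because a free quotient ``would force $R$ to consist only of relators that are trivial or cancel away.'' That is not true: the presentation $\langle a,b \mid ab\rangle$ has a nonempty set of cyclically reduced relators, none of which is a proper power, and it satisfies $C'(1/6)$ vacuously (the symmetrized set has no pieces at all), yet the group presented is infinite cyclic, hence free of geometric dimension $1$; its aspherical presentation $2$-complex is homotopy equivalent to a circle, so asphericity alone gives only $\mathrm{gd}(G)\le 2$. Thus $C'(1/6)$ together with ``no proper powers'' does not by itself rule out freeness, and the equality $\mathrm{gd}(G)=2$ requires either a nondegeneracy hypothesis implicit in the cited sources or a separate argument showing $G$ is not free (e.g.\ via $H_2$ or cohomological considerations for the specific presentation); your stated reason does not supply one. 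Note that in the paper's applications only asphericity of the presentation complex (hence $\mathrm{gd}\le 2$) is actually used, so this gap does not propagate, but as written your proof of the second bullet is incorrect.
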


The first two points follow from \cite{chiswell, olshanskii}. The latter follows from the following theorem which shows that $G$ has linear Dehn function. 
\begin{theorem}\cite[Theorem V.4.5]{lyndonschupp}\label{green}
	Let $G = \langle A\mid R\rangle$ be a small cancellation group where $R$ is finite. Let $w$ be a word in $A$ which represents the trivial element in $G$.
	Then $w$ contains a sub-word which is more than half a relator. 
\end{theorem}

We can in fact prove this theorem without the requirement that $R$ is finite. 

\begin{lemma}
	Suppose $\langle A\mid R\rangle $ satisfies $C'(\frac{1}{6})$ with $A$ finite. 
	Suppose that $w$ is a word representing the identity. 
	Then $w$ contains a sub-word which is more than half a relator. 
\end{lemma}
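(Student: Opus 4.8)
The statement is a version of the Greendlinger small cancellation lemma (\cref{green}) in which the relator set $R$ is allowed to be infinite. Since the hypothesis $C'(1/6)$ is a condition on pieces, and a piece only involves two relators at a time, the small cancellation combinatorics are entirely ``local'' and should not feel the cardinality of $R$. The plan is therefore to reduce to the finite case: given a word $w$ representing the identity in $G = \langle A \mid R\rangle$, only finitely many relators of $R$ are actually needed to witness that $w =_G 1$.

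First I would take a van Kampen diagram $D$ for $w$ over the presentation $\langle A \mid R\rangle$; such a diagram exists since $w$ represents the identity. The diagram $D$ is finite, so only finitely many relators $r_1,\dots,r_k \in R$ appear as labels of its $2$-cells. Let $R' = \{r_1,\dots,r_k\}$ and let $G' = \langle A \mid R'\rangle$. The key observation is that $R'$, being a subset of $R$, still satisfies $C'(1/6)$: any piece in $R'$ is a priori a piece in $R$ (the witnessing cyclic permutations $s_i \neq s_j$ of $r_i, r_j \in R' \subseteq R$ are cyclic permutations of relators in $R$), hence has length $< \tfrac16 |r|$. Moreover $w$ represents the identity in $G'$, since $D$ is already a van Kampen diagram over $\langle A \mid R'\rangle$. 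Now \cref{green} applies to the \emph{finite} presentation $\langle A \mid R'\rangle$: $w$ contains a subword which is more than half of some relator $r \in R' \subseteq R$. This is exactly the conclusion, since a relator of $R'$ is a relator of $R$.

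The only point requiring a moment's care is the ``no proper power'' / reducedness bookkeeping: \cref{green} as quoted presumes $w$ is (cyclically) reduced and nontrivial, and one should make sure that passing to $R'$ does not introduce issues --- but it does not, because reducedness of $w$ and the $C'(1/6)$ property of $R'$ are both inherited verbatim. I do not expect a genuine obstacle here; the entire content is the remark that the small cancellation hypothesis is inherited by subsets and that van Kampen diagrams are finite, so Greendlinger's lemma for finite presentations can be invoked as a black box. If one prefers to avoid even citing the finite case, one can instead run the standard Greendlinger argument directly: a reduced van Kampen diagram $D$ for $w$ over $\langle A\mid R\rangle$ satisfies the usual combinatorial curvature estimate (interior $2$-cells have $\geq 6$ sides, by $C'(1/6)$), Euler characteristic forces a boundary $2$-cell whose interior edges comprise at most $2$ pieces, hence whose boundary word shares more than half its length with $\partial w$ --- but reducing to \cref{green} is cleaner.
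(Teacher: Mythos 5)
Your proposal is correct and follows essentially the same route as the paper: the paper writes $w$ as a (finite) product of conjugates of relators, observes that $C'(1/6)$ is inherited by the finite subpresentation on the relators that appear, and invokes \cref{green}, which is exactly your van Kampen diagram reduction in slightly different language. No substantive difference.
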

\begin{proof}
	Since $w$ is trivial, we can write it as a product of conjugates of relators.
	This is a finite product, and thus, we let $r_1, \dots, r_l$ be the relators appearing in this product. 
	Then $w$ is trivial in the group $\langle A\mid r_1, \dots, r_l\rangle$. 
	Since $C'(\frac{1}{6})$ is preserved under taking sub presentations, \cref{green} shows that $w$ contains a sub-word which is more than half of one of the relators $r_1, \dots, r_l$. 
\end{proof}

This allows us to understand the filling function of the universal cover of the presentation complex. 

\begin{corollary}\label{cor:linearinfinitepres}
	Let $\langle A\mid R\rangle$ be a $C'(\frac{1}{6})$ small cancellation group. Let $Z$ be the universal cover of the presentation complex. 
	Then $\homfill_{Z}^2(\ell) \sim \ell$. 
\end{corollary}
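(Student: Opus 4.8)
The plan is to bound the homological filling function $\homfill^2_Z(\ell)$ from above by $\ell$ using the preceding lemma, and to get the matching lower bound essentially for free (or dismiss it as trivial). The key point is that although the presentation $\langle A\mid R\rangle$ may be infinite, the lemma just proved tells us that any word $w$ representing the identity contains a subword that is more than half of some relator $r\in R$; this is the usual Dehn's algorithm input, and it works uniformly over all of $R$ even when $R$ is infinite, precisely because $A$ is finite and $C'(1/6)$ passes to finite subpresentations.

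First I would recall that $Z$ is the universal cover of the presentation complex, so $Z$ is simply connected (indeed contractible, by the asphericity statement) and $H_1(Z,\mathbb Z)=0$; thus every $1$-cycle in $Z$ is a sum of $2$-boundaries, and $\homfill^2_Z$ is well defined. Next I would observe that a $1$-cycle $c$ of norm $\leq \ell$ in $Z$ can, after a routine reduction, be written as a sum of at most $\ell$ edge-loops based at vertices, so it suffices to fill each such loop; equivalently, it suffices to show that a closed edge-path $w$ in $Z$ of length $\leq \ell$ bounds a $2$-chain of norm $O(\ell)$. Since $Z$ is simply connected, such a $w$ spells a word in $F(A)$ representing the identity.

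The heart of the argument is a Dehn's-algorithm induction on $|w|$: by the lemma, $w$ has a subword $u$ which is more than half a relator $r$, say $r = u\bar v$ with $|u| > |v|$. Attaching the single $2$-cell labelled $r$ along the appropriate lift, we replace the arc $u$ in $w$ by $v$, producing a word $w'$ with $|w'| < |w|$ that still represents the identity and bounds the same $1$-cycle up to the boundary of that one $2$-cell. Iterating, after at most $|w| \leq \ell$ steps we reduce to the empty word, having used at most $\ell$ two-cells; hence $\fvol_Z(w) \leq \ell$, and so $\homfill^2_Z(\ell) \preceq \ell$. For the reverse inequality, $\homfill^2_Z(\ell) \succeq \ell$ holds because the complex is nontrivial (some relator has positive length, so there are $1$-cycles of norm $\sim \ell$ requiring at least one $2$-cell each, or one simply cites that homological Dehn functions of non-simply-connected-at-infinity… — more simply, any infinite group has $\homfill^2 \succeq \ell$ trivially from the definition of $\prec$). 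Combining the two bounds gives $\homfill^2_Z(\ell)\sim\ell$.

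The main obstacle — really the only subtlety — is bookkeeping at the level of \emph{cellular $1$-chains} rather than \emph{edge-loops}: Dehn's algorithm is naturally phrased for loops, while $\homfill^2$ is defined via the $\ell^1$-norm on $C_1(Z,\mathbb Z)$. I would handle this by the standard decomposition of an integral $1$-cycle into a sum of at most $|c|$ simple closed edge-paths (a greedy Eulerian-type argument, each path consuming at least one edge of $c$), filling each separately, and noting the filling volumes add with the norms. One should also note that the lift of the relator $r$ realizing the subword $u$ sits inside $Z$ because $Z$ is the universal cover, so there is no ambiguity in which $2$-cell to attach; and that each Dehn reduction strictly decreases length, guaranteeing termination in $\leq |w|$ steps with $\leq |w|$ cells used. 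No estimate here depends on $R$ being finite.
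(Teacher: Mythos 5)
Your proposal is correct and follows essentially the route the paper intends: the corollary is left without an explicit proof precisely because it is the standard Dehn's-algorithm consequence of the preceding lemma (each Greendlinger reduction costs one $2$-cell and strictly shortens the word, giving a linear bound on filling volume, with the lower bound $\ell \prec \homfill_Z^2(\ell)$ automatic from the definition of $\prec$). Your extra bookkeeping, decomposing an integral $1$-cycle into edge-loops and tracking the $2$-chain at the level of $C_2(Z,\mathbb{Z})$, is just the standard way of making that implicit argument precise and does not constitute a different approach.
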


We are now ready to prove our theorem. 

\begin{theorem}\label{thm:smallcancellation}
	Let $G = \langle A\mid R\rangle$ be a $C'(\frac{1}{6})$ presentation where no relator is a proper power. If $H$ is a finitely presented group that admits a coarse embedding into $G$, then $H$ is hyperbolic. In particular, all finitely presented subgroups of $G$ are hyperbolic.
\end{theorem}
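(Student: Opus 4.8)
The plan is to mirror the second case of the proof of \cref{main_thm1} almost verbatim, replacing the role played there by ``$G$ admits an infinite aspherical presentation'' with the analogous facts for small cancellation groups established in this section. By \cref{cor:linearinfinitepres}, if $Z$ denotes the universal cover of the presentation complex of $\langle A\mid R\rangle$, then $Z$ is a contractible $2$-dimensional complex with a free, proper, cocompact-on-the-$1$-skeleton action of $G$ and $\homfill_Z^2(\ell)\sim\ell$. This is precisely the input needed to run the machinery of Sections \ref{section:embedding} and \ref{section:filling_functions}.

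The key steps, in order, are as follows. First, let $f\colon H\to G$ be the given coarse embedding; by \cref{prop:coarseimplieslip} it is $C$-Lipschitz for $C=\rho_+(1)$. Since $H$ is finitely presented, let $X$ be the universal cover of a finite presentation complex for $H$; this is a cocompact Cayley $2$-complex. Second, apply \cref{lem:injection} (with $n=2$) to the map $f$, the complex $X$, and the Cayley $2$-complex $Z$ for $G$: this produces a Cayley $2$-complex $Y$ for $H$ with cocompact $1$-skeleton containing $Z$ as a deformation retract, together with a map $\phi\colon X\to Y$ extending $f$ which is injective on $1$-skeleta and sends each $2$-cell to at most $N$ $2$-cells; subdividing as in \cref{rem:cellular} we may assume $\phi$ is cellular. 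Set $M=\phi(X)$. Third, give $M$ the induced path metric; since $\phi$ is a coarse embedding, \cref{lem:coarse} and \cref{thm:qi} show $X$ is quasi-isometric to $M$, so by \cref{lem:equivfill} we get $\homfill_X^2(\ell)\sim\homfill_{M,S}^2(\ell)$ where $S$ is the set of $1$-cycles of $X$ inside $Y$. Fourth, observe that $Z$ is contractible and $2$-dimensional, so $H_2(Z,\Z)=0$ and $\partial\colon C_2(Z,\Z)\to C_1(Z,\Z)$ is injective; restricting to the subcomplex $M$ gives $H_2(M,\Z)=0$. Applying \cref{lem:fillinginsub} and \cref{cor:fillingbound} (with $Y$ there taken to be $Z$ and the $M$ there our $M$) yields $\homfill_M^2(\ell)\prec\homfill_Z^2(\ell)\sim\ell$. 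Chaining the equivalences gives $\homfill_H^2(\ell)\prec\ell$, hence $\homfill_H^2(\ell)\sim\ell$, and by \cite[Theorem 5.2]{gersten} this means $H$ is hyperbolic. The ``in particular'' clause follows since the inclusion of a finitely generated (here finitely presented) subgroup is a coarse embedding.

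One subtlety to handle carefully: \cref{lem:injection} as stated requires the ambient complex $Z$ to be a Cayley $n$-complex for $G$ with cocompact $(n-1)$-skeleton, and the small cancellation presentation complex for $G$ may have infinitely many $2$-cell orbits, but its $1$-skeleton (the Cayley graph of $G$ with respect to the finite generating set $A$) is cocompact, so the hypotheses are met; the output $Y$ still has cocompact $1$-skeleton since only finitely many orbits of edges and of $2$-cells are added in the construction. I also need to make sure the quasi-isometry invariance of $\homfill^2$ for $M$ is legitimate: $M$ carries a proper cocompact $G$-action induced by $\phi$ and the $G$-action on $Z$ (since $Z$ is the one with the cocompact structure used to build $Y$, and $\phi$ can be taken $G$-equivariant on the relevant skeleton), matching the setup of \cref{lem:equivfill}.

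The main obstacle I expect is purely bookkeeping rather than conceptual: keeping straight which complex plays which role in each of the four cited lemmas — in particular that \cref{lem:injection} is being applied with the \emph{infinite-type} complex $Z$ of the small cancellation group as the ``$Z$'' of that lemma and the \emph{finite} presentation complex $X$ of $H$ as the ``$X$'', and that \cref{lem:fillinginsub}/\cref{cor:fillingbound} are then applied with $Z$ (contractible, $2$-dimensional, $H_2=0$) in place of the ``$Y$'' of those statements, not with the $Y$ produced by \cref{lem:injection}. Once that dictionary is fixed the argument is a direct transcription of the second half of the proof of \cref{main_thm1}, with \cref{cor:linearinfinitepres} supplying the final linear bound.
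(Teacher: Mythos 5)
Your overall architecture is the paper's own: apply \cref{prop:coarseimplieslip} and \cref{lem:injection} with the universal cover $Z$ of the (infinite) small cancellation presentation complex as the ambient Cayley $2$-complex, set $M=\phi(X)$, use \cref{thm:qi} and \cref{lem:equivfill} to transfer fillings between $X$ and $M$, invoke \cref{lem:fillinginsub}/\cref{cor:fillingbound} plus \cref{cor:linearinfinitepres} to get a linear bound, and finish with Gersten's characterization. However, the one place where you deliberately deviate from the paper is exactly where your argument breaks: you assert that $M$ is a subcomplex of $Z$ and insist that \cref{lem:fillinginsub} and \cref{cor:fillingbound} be applied ``with $Z$ \ldots in place of the `$Y$' of those statements, not with the $Y$ produced by \cref{lem:injection}.'' This is false: the map $\phi$ built in \cref{lem:injection} sends the edges of $X$ to the \emph{newly added} edges joining pairs of vertices of $Z$ at distance $\leq C$ (and its $2$-cells to chains that may use the newly attached $2$-cells), so $M=\phi(X)$ lies in $Y$ but is in general not contained in $Z$. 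Consequently the hypothesis ``$M$ a subspace of the $2$-dimensional complex with vanishing $H_2$'' is not met for the pair $(M,Z)$, and the inequality $\homfill_M^2(\ell)\prec\homfill_Z^2(\ell)$ is unjustified as you state it.

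The repair is precisely what the paper does: take the ambient complex to be $Y$ itself. Since $n=2$, the construction of \cref{lem:injection} only attaches edges and $2$-cells to $Z$, so $Y$ is still $2$-dimensional, and $Z$ (contractible, because the presentation is aspherical when no relator is a proper power) is a deformation retract of $Y$, so $H_2(Y,\Z)=0$. Then \cref{lem:fillinginsub}/\cref{cor:fillingbound} apply to $M\subset Y$ and give $\homfill_M^2(\ell)\prec\homfill_Y^2(\ell)$; one then observes that $\homfill_Y^2(\ell)$ is linear, using \cref{cor:linearinfinitepres} for $Z$ together with the fact that each of the finitely many orbits of new cells of $Y$ has a filling in $Z$ of uniformly bounded size (the paper glosses this identification too, but it is the step your version silently replaces with the false inclusion $M\subset Z$). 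A smaller quibble: your parenthetical that $M$ carries a proper cocompact $G$-action ``matching the setup of \cref{lem:equivfill}'' is not right either ($M$ is the image of the $H$-complex and is not $G$-invariant), but nothing in \cref{lem:equivfill} as stated needs it, so this does not affect the argument once the ambient-complex issue above is fixed.
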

\begin{proof}
	Let $H$ be a finitely presented group with a coarse embedding $f\colon H\to G$. 
	Let $Z$ be the universal cover of the presentation complex for $G$. 
	Let $X$ be the universal cover of a presentation complex for $H$. 
	By \cref{lem:injection}, we have a cellular map $\phi\colon X\to Y$ where $Y$ is a Cayley 2-complex for $H$ containing $Z$ as a deformation retract. 
	Thus, $Y$ is contractible. 
	
	Let $M = \phi(X)$. Then \cref{thm:qi} shows that $X$ and $M$ are quasi-isometric, and by \cref{lem:equivfill}, they have equivalent filling functions. 
	
	Since $Y$ is aspherical, we see by \cref{lem:fillinginsub} that $\homfill_{M}^2(\ell)\prec \homfill_Y^2(\ell)$. The latter is equivalent to a linear function by \cref{cor:linearinfinitepres}. Thus, $X$ satisfies a linear filling function, and hence, $H$ is a hyperbolic group by \cite[Theorem 3.1]{gersten}. 
\end{proof}

\section{Future Questions}\label{section:future}
We finish with some discussion of questions of interest to the authors. Throughout this article, we considered coarse embeddings. However, one could weaken this condition by considering Lipschitz embeddings which are not coarse embeddings. In many cases, our theorems still hold. For example, in \cref{fig:notcoarse}, we show an example where $f$ is a Lipschitz embedding which is not a coarse embedding but the subcomplex is quasi-isometric to the group. This gives evidence to a possible strengthening of Theorem \ref{main_thm1} and \ref{main_thm3} where Lipschitz embeddings take the place of coarse embeddings. We do not know of a counterexample at this time. Therefore, we have the following questions.

\begin{figure}[H]
	\centering
	\begin{tikzpicture}[scale=0.4]
	\draw (-1, -1) -- (1, -1) -- (1, 1) -- (-3, 1) -- (-3, -5) -- (5, -5) -- (5, 3);
	\draw (-1, -1) -- (-1, -3) -- (3, -3) -- (3, 3) -- (-5, 3) -- (-5, -7) -- (7, -7) -- (7, 3);
	\draw [thick] (-6, -2) -- (8, -2);
	\draw [thick] (0, -8) -- (0, 4);		
	\end{tikzpicture}
	\caption{An example of a Lipschitz injection $\Z\to \Z^2$. In this example for any $D$ there are points in $\Z$ of distance $\geq D$ such that there images have distance $1$. However, the induced path metric on the subcomplex is quasi-isometric to $\Z$. }
	\label{fig:notcoarse}
\end{figure}
\begin{ques}\label{q1}
Suppose that $H$ is a group of type $F_{k+1}$. Suppose that $G$ admits a finite $(k+1)$-dimensional $K(G,1)$. If $H$ admits a Lipschitz embedding into $G$, does there exist an inequality of $(k+1)$-th dimensional homological Dehn functions? 

If not, can one find an example of groups $G$ and $H$ as above where there exist a Lipschitz embedding of $H$ into $G$ where $H$ has strictly higher $(k+1)$-th dimensional homological Dehn function than $G$?
\end{ques}

Through Theorem \ref{main_thm2}, we generalize \cite[Theorem 5.4]{gersten} to coarse embeddings. However, one may ask if the same holds for a general Lipschitz embedding.
\begin{ques}
Let $G$ be a hyperbolic group of geometric dimension $2$. If $H$ admits a Lipschitz embedding into $G$, is $H$ necessarily hyperbolic?
\end{ques}

Another finiteness question that we are interested in involves cohomological and homological dimension over a ring $R$. As originally proved by Sauer \cite{sauer}, if we are given a coarse embedding of $H$ into $G$ where both $G$ and $H$ have finite cohomological (homological) dimension over $R$, then the cohomological (homological) dimension of $H$ is bounded by that of $G$. We may ask a similar question for Lipschitz embeddings.
\begin{ques}
Suppose that $H$ admits a Lipschitz embedding into $G$ and that $G$ and $H$ both have finite cohomological (homological) dimension over a commutative ring $R$. Is the $R$-cohomological ($R$-homological) dimension of $H$ bounded above by that of $G$? 

If not, can one find an example of a Lipschitz embedding of groups $G$ and $H$ where $H$ has strictly greater $R$-cohomological dimension than that of $G$?
\end{ques}

\bibliographystyle{plain}
\bibliography{bibs}
\end{document}